\documentclass{article}
%%%%%%%%%%%%%%%%%%%%%%%%%%%%%%%%%%%%%%%%%%%%%%%%%%%%%%%%%%%%%%%%%%%%%%%%%%%%%%%%%%%%%%%%%%%%%%%%%%%%%%%%%%%%%%%%%%%%%%%%%%%%
\usepackage{amsmath}

\setcounter{MaxMatrixCols}{10}
%TCIDATA{OutputFilter=LATEX.DLL}
%TCIDATA{Version=4.10.0.2363}
%TCIDATA{Created=Sunday, September 22, 2013 10:48:52}
%TCIDATA{LastRevised=Tuesday, April 08, 2014 10:27:15}
%TCIDATA{<META NAME="GraphicsSave" CONTENT="32">}
%TCIDATA{<META NAME="DocumentShell" CONTENT="Standard LaTeX\Blank - Standard LaTeX Article">}
%TCIDATA{CSTFile=40 LaTeX article.cst}
%TCIDATA{PageSetup=22,22,72,72,0}
%TCIDATA{Counters=arabic,1}

\newtheorem{theorem}{Theorem}

\newtheorem{lemma}{Lemma}
\newtheorem{remark}{Remark}
\newtheorem{conjecture}{Conjecture}

\newtheorem{notation}{Notation}
\newtheorem{proposition}{Proposition}
\newenvironment{proof}[1][Proof]{\noindent\textbf{#1.} }{\ \rule{0.5em}{0.5em}}
\input{tcilatex}
\pagestyle{myheadings}

\begin{document}

\title{A logarithmic mean and intersections of osculating hyperplanes in $%
R^{n}$}
\author{Alan Horwitz \\
%EndAName
708A Putnam Blvd.\\
Wallingford, PA 19086-6701 \\
alh4@psu.edu}
\date{4/8/14}
\maketitle

\begin{abstract}
We discuss a special case of the means defined in \cite{H}. Let $C$ be the
curve in $R^{n}$ with vector equation $\hat{\alpha}(t)=\langle t,t\log
t,...,t{\large (}\log t{\large )}^{n-1}\rangle $. Let $0<a_{1}<\cdots <a_{n}$
and let $O_{k}$ be the osculating hyperplane to $C$ at $a_{k}$. Then we show
that $O_{1},...,O_{n}$ have a unique point of intersection, $%
P=(i_{1},...,i_{n})\in R^{n}$, and in particular, $i_{1}$ equals the mean $%
M(a_{1},...,a_{n})=(n-1)!\dsum\limits_{j=1}^{n}\tfrac{a_{j}}{\prod\limits 
_{\substack{ i=1  \\ i\neq j}}^{n}(\ln a_{j}-\ln a_{i})}=$ the logarithmic
mean of Neuman.

\textbf{Key Words: }logarithmic mean, osculating hyperplane, Wronskian
\end{abstract}

\section{Introduction}

For $n\geq 3$, let $C$ be the curve in $R^{n}$ with vector equation $\hat{%
\alpha}(t)=\langle x_{1}(t),...,x_{n}(t)\rangle $, let $W_{j,n}(t)=W\left(
x_{1}^{\prime }(t),...,x_{j-1}^{\prime }(t),x_{j+1}^{\prime
}(t),...,x_{n}^{\prime }(t)\right) =$

the Wronskian of $x_{1}^{\prime }(t),...,x_{j-1}^{\prime
}(t),x_{j+1}^{\prime }(t),...,x_{n}^{\prime }(t),j=1,...,n$, let $\hat{x}$
be the vector $\langle x_{1},...,x_{n}\rangle $, and let $\hat{n}(t)$ be the
vector $\langle W_{1,n}(t),-W_{2,n}(t),...,(-1)^{n+1}W_{n,n}(t)\rangle $. In 
\cite{H} we defined the \textbf{osculating hyperplane}, $O_{a}$, to $C$ at $%
t=a$ to be the hyperplane in $R^{n}$ with equation

\begin{equation*}
\hat{x}\cdot \hat{n}(a)=\hat{\alpha}(a)\cdot \hat{n}(a)\text{, assuming that 
}\hat{n}(a)\neq \hat{0}\text{.}
\end{equation*}%
It is not hard to show(see \cite{H}) that $O_{a}$ has $n$th order contact
with $C$ at $t=a$. That is, if $C_{a}(t)={\large (}\hat{\alpha}(t)-\hat{%
\alpha}(a){\large )}\cdot \hat{n}(a)$, then $C_{a}^{(j)}(a)=0$ for $%
j=0,1,...,n-1$. This generalizes the osculating plane in $R^{3}$, which has $%
3$rd order contact with $C$ at $a$. For example, if $\hat{\alpha}(t)=\langle
t,t^{2},t^{3},t^{4}\rangle $, then $W_{1,4}(t)=W\left(
2t,3t^{2},4t^{3}\right) =48t^{3},W_{2,4}(t)=W\left( 1,3t^{2},4t^{3}\right)
=72t^{2},W_{3,4}(t)=W\left( 1,2t,4t^{3}\right) =48t,W_{3,4}(t)=W\left(
1,2t,3t^{2}\right) =12$, and $\hat{n}(t)=\langle
48t^{3},-72t^{2},48t,-12\rangle $. The equation of the osculating hyperplane
at $t=1$ is $\langle x_{1},x_{2},x_{3},x_{4}\rangle \cdot \hat{n}(1)=\hat{%
\alpha}(1)\cdot \hat{n}(1)$ or $4x_{1}-6x_{2}+4x_{3}-x_{4}=1$. $%
C_{1}(t)=\langle t-1,t^{2}-1,t^{3}-1,t^{4}-1\rangle \cdot \langle
48,-72,48,-12\rangle =-12\left( \allowbreak t^{4}-4t^{3}+6t^{2}-4t+1\right) $%
, and it then follows that $C_{1}(1)=C_{1}^{\prime }(1)=C_{1}^{\prime \prime
}(1)=C_{1}^{\prime \prime \prime }(1)=0$.

In \cite{H} the author proved the following general result about defining
means using intersections of osculating hyperplanes to curves in $R^{n}$.

\begin{theorem}
\label{Osc}Let $C$ be the curve in $R^{n}$ with vector equation $\hat{\alpha}%
(t)=\langle x_{1}(t),...,x_{n}(t)\rangle ,t\in I=[a,b]$, where each $%
x_{k}\in C^{n-1}(I)$ and is strictly monotone on $I$.

Let $W_{j,n}(t)=W{\large (}x_{1}^{\prime }(t),...,x_{j-1}^{\prime
}(t),x_{j+1}^{\prime }(t),...,x_{n}^{\prime }(t){\large )}$ be the Wronskian
of $x_{1}^{\prime }(t),...,x_{j-1}^{\prime }(t),x_{j+1}^{\prime
}(t),...,x_{n}^{\prime }(t)$. Assume that every subset of $%
\{W_{1,n},...,W_{n,n}\}$ is an extended complete Chebyshev system on $I$.
Let $a=a_{1}<\cdots <a_{n}=b$ be $n$ given points in $I$, and let $O_{k}$ be
the osculating hyperplane to $C$ at $a_{k}$. Then

(1) \ $O_{1},...,O_{n}$ have a unique point of intersection, $P$, in $R^{n}$%
, and

(2) If $P=(i_{1},...,i_{n})$, then $a_{1}<x_{k}^{-1}(i_{k})<a_{n}$ for $%
k=1,2,...,n$
\end{theorem}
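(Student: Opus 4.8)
The plan is to convert the geometric statement into a single $n\times n$ linear system and then extract both conclusions from the sign theory of Chebyshev systems. Writing the sought point as $\hat{x}=\langle i_{1},\dots ,i_{n}\rangle $, the condition that it lie on $O_{k}$ is $\hat{x}\cdot \hat{n}(a_{k})=\hat{\alpha}(a_{k})\cdot \hat{n}(a_{k})$. Expanding the full Wronskian $W(x_{1},\dots ,x_{n})$ along its first row shows that $\hat{\alpha}(t)\cdot \hat{n}(t)=\sum_{j=1}^{n}(-1)^{j+1}x_{j}(t)W_{j,n}(t)=W(x_{1},\dots ,x_{n})(t)$, so, writing $g_{j}(t)=(-1)^{j+1}W_{j,n}(t)$ and $w(t)=W(x_{1},\dots ,x_{n})(t)$, the $n$ incidence conditions become the system
\[
\sum_{j=1}^{n}g_{j}(a_{l})\,i_{j}=w(a_{l}),\qquad l=1,\dots ,n.
\]
First I would settle part (1). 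The coefficient matrix is $M=[\,g_{j}(a_{l})\,]_{l,j}$, and pulling the fixed sign $(-1)^{j+1}$ out of each column gives $\det M=\pm \det [\,W_{j,n}(a_{l})\,]_{l,j}$. Since $\{W_{1,n},\dots ,W_{n,n}\}$ is in particular a Chebyshev system on $I$ and $a_{1}<\cdots <a_{n}$, this generalized Vandermonde determinant is nonzero, so $M$ is invertible and the system has exactly one solution $P=(i_{1},\dots ,i_{n})$, which is the unique common point of $O_{1},\dots ,O_{n}$.

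For part (2) I would fix $k$ and reduce everything to showing that $i_{k}$ lies strictly between $x_{k}(a_{1})$ and $x_{k}(a_{n})$; since $x_{k}$ is continuous and strictly monotone, the intermediate value theorem then produces a unique $\xi _{k}\in (a_{1},a_{n})$ with $x_{k}(\xi _{k})=i_{k}$, i.e.\ $x_{k}^{-1}(i_{k})=\xi _{k}\in (a_{1},a_{n})$. Solving by Cramer's rule gives $i_{k}=\det M_{k}/\det M$, where $M_{k}$ replaces the $k$th column of $M$ by $(w(a_{l}))_{l}$. For an endpoint $m\in \{1,n\}$ I would then compute $i_{k}-x_{k}(a_{m})=\det \tilde{M}_{m}/\det M$, where $\tilde{M}_{m}$ replaces the $k$th column of $M$ by $(\psi _{m}(a_{l}))_{l}$ with $\psi _{m}(t)=w(t)-x_{k}(a_{m})g_{k}(t)$. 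Substituting $w=\sum_{j}g_{j}x_{j}$ yields $\psi _{m}(t)=\sum_{j\neq k}g_{j}(t)x_{j}(t)+g_{k}(t)\big(x_{k}(t)-x_{k}(a_{m})\big)$, so the perturbation carried by the $k$th column is governed by the factor $x_{k}(t)-x_{k}(a_{m})$, which vanishes at $a_{m}$ and is of one strict sign on the rest of $(a_{1},a_{n})$ by monotonicity of $x_{k}$.

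The heart of the argument, and the step I expect to be the main obstacle, is to show that $\det \tilde{M}_{1}$ and $\det \tilde{M}_{n}$ have strictly opposite signs, equivalently that $(i_{k}-x_{k}(a_{1}))(i_{k}-x_{k}(a_{n}))<0$. This is exactly where the full hypothesis -- that \emph{every} subset of $\{W_{1,n},\dots ,W_{n,n}\}$ is an extended complete Chebyshev system -- is needed: it forces all the generalized Vandermonde determinants appearing above, including degenerate ones arising as nodes coalesce, to be of a single strict sign, which is precisely the sign-definiteness input for the mean value theorem for divided differences relative to an ECT-system. That sign control pins $i_{k}$ strictly between $x_{k}(a_{1})$ and $x_{k}(a_{n})$, and the intermediate value theorem together with monotonicity finishes part (2). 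As a sanity check, for $\hat{\alpha}(t)=\langle t,t\log t,\dots ,t(\log t)^{n-1}\rangle $ this specializes, via the substitution $b_{j}=\ln a_{j}$ and the identity $\sum_{j}e^{b_{j}}/\prod_{i\neq j}(b_{j}-b_{i})=e^{\eta }/(n-1)!$ for some $\eta \in (\ln a_{1},\ln a_{n})$, to the classical bound $a_{1}<M(a_{1},\dots ,a_{n})<a_{n}$ for the Neuman logarithmic mean.
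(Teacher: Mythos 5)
Your part (1) is correct and is essentially the only natural route: the first-row expansion identity $\hat{\alpha}(t)\cdot \hat{n}(t)=W(x_{1},\dots ,x_{n})(t)$ turns the incidence conditions into a square linear system, and the coefficient determinant $\det [\,(-1)^{j+1}W_{j,n}(a_{l})\,]$ is nonzero precisely because $\{W_{1,n},\dots ,W_{n,n}\}$ is a Chebyshev system and the nodes $a_{1}<\cdots <a_{n}$ are distinct. (Note that the present paper never reproves Theorem~\ref{Osc}; it quotes it from \cite{H}, and in its special case it obtains invertibility not from Chebyshev theory but from the explicit evaluation of this determinant in Proposition~\ref{P3}.)

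Part (2), however, contains a genuine gap, and it is the whole theorem. Your reduction to ``$\det \tilde{M}_{1}$ and $\det \tilde{M}_{n}$ have strictly opposite signs'' is a tautology rather than progress: by linearity of the determinant in the $k$th column and Cramer's rule, $\det \tilde{M}_{m}=\det M_{k}-x_{k}(a_{m})\det M=\det M\,\bigl(i_{k}-x_{k}(a_{m})\bigr)$, so the sign claim is verbatim the inequality $(i_{k}-x_{k}(a_{1}))(i_{k}-x_{k}(a_{n}))<0$ that you set out to prove; nothing has been gained except the (easy) IVT/monotonicity step. The entire content of (2) is thus concentrated in the one step you do not carry out, and the tool you gesture at does not apply off the shelf. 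A mean value theorem for generalized divided differences relative to an ECT-system would represent $i_{k}$ as a ratio of Wronskian-type determinants at a single point $\xi$, say $W(\{g_{j}\}_{j\neq k},w)(\xi )/W(\{g_{j}\}_{j\neq k},g_{k})(\xi )$; but since $w=\sum_{j}g_{j}x_{j}$ and Wronskians are not linear over multiplication by \emph{functions} (one has $W(\dots ,g_{j}x_{j})\neq x_{j}\,W(\dots ,g_{j})=0$ for $j\neq k$), this ratio is not visibly of the form $x_{k}(\xi )$. Equivalently, in your own column decomposition the terms $\det [\dots ,g_{j}x_{j},\dots ]$ with $j\neq k$ are independent of $m$ and carry no evident sign, so the factor $x_{k}(t)-x_{k}(a_{m})$ in the $k$th column alone cannot force the sign change. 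What is missing is the actual sign/zero-counting argument in which the hypothesis that \emph{every} subset of $\{W_{1,n},\dots ,W_{n,n}\}$ is an ECT system gets used --- for instance applied to the function $G(t)=(\hat{P}-\hat{\alpha}(t))\cdot \hat{n}(t)$, which vanishes at all $n$ nodes $a_{1},\dots ,a_{n}$. That argument is the substance of the proof in \cite{H}; your proposal names the destination but does not travel there.
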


By Theorem \ref{Osc}, one can define $n$ symmetric means in $a_{1},...,a_{n}$
as follows: $M_{k}\left( a_{1},...,a_{n}\right) =x_{k}^{-1}\left(
i_{k}\right) ,k=1,...,n$. In particular, we showed in \cite{H} that if $%
x_{k}(t)=t^{k}$, $k=1,...,n-2$, $x_{n-1}(t)=\log t$, and $x_{n}(t)=\dfrac{1}{%
t}$, then $M_{n}(a_{1},...,a_{n})=P(a_{1},...,a_{n})$, where $P$ is the
logarithmic mean in $n$ variables defined by Pittenger \cite{P}. At the end
of \cite{H} we stated that perhaps another interesting generalization of the
logarithmic mean to $n$ variables would be $M_{1}(a_{1},...,a_{n})$, where $%
x_{k}(t)=t{\large (}\log t{\large )}^{k-1}$, $k=1,...,n$. We never pursued
that, but the point of this paper is to prove that $M_{1}(a_{1},...,a_{n})$
equals the following logarithmic mean in $n$ variables defined by Neuman 
\cite{N}: $L_{N}(a_{1},...,a_{n})=(n-1)!\dsum\limits_{j=1}^{n}\tfrac{a_{j}}{%
\prod\limits_{\substack{ i=1 \\ i\neq j}}^{n}(\ln a_{j}-\ln a_{i})}$. That
is, we show that Neuman's logarithmic mean equals the $x$ coordinate of the
intersection of the osculating hyperplanes to the curve $\hat{\alpha}%
(t)=\langle t,t\log t,...,t{\large (}\log t{\large )}^{n-1}\rangle $. $L_{N}$
was also defined in a different way(and unknowingly) by Xiao and Zhang \cite%
{XZ}. Mustonen \cite{MUS} gives a good summary of these connections and
other generalizations. See also the paper by Merikoski \cite{M}. The methods
used in this paper are decidedly different than those in the papers just
cited. We now state our main result.

\begin{theorem}
\textbf{\label{main}}For $n\geq 3$, let $C$ be the curve in $R^{n}$ with
vector equation $\hat{\alpha}(t)=\langle t,t\log t,...,t{\large (}\log t%
{\large )}^{n-1}\rangle $. Let $0<a_{1}<\cdots <a_{n}$ and let $O_{k}$ be
the osculating hyperplane to $C$ at $a_{k}$. Then $O_{1},...,O_{n}$ have a
unique point of intersection, $P=(i_{1},...,i_{n})\in R^{n}$, and 
\begin{equation}
M_{1}(a_{1},...,a_{n})=(n-1)!\dsum\limits_{j=1}^{n}\tfrac{a_{j}}{\prod\limits
_{\substack{ i=1 \\ i\neq j}}^{n}(\ln a_{j}-\ln a_{i})}.  \label{1}
\end{equation}%
Note that for $n=2$, the $x$ coordinate of the point of intersection of the
tangent lines to the curve $\hat{\alpha}(t)=\langle t,t\log t\rangle $ is
the well known logarithmic mean $L(a,b)=\tfrac{b-a}{\ln b-\ln a}$ in two
variables. So in a certain sense the logarithmic mean $L_{N}(a_{1},...,a_{n})
$ above is a natural generalization of the logarithmic mean in two variables
since it involves intersections of osculating hyperplanes to a curve in $%
R^{n}$ whose first two components are $t$ and $t\log t$, and where the
remaining components follow the \textquotedblleft natural
pattern\textquotedblright\ of the first two components.
\end{theorem}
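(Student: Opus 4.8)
The plan is to apply Theorem~\ref{Osc} to the specific curve $\hat{\alpha}(t)=\langle t, t\log t,\dots, t(\log t)^{n-1}\rangle$, so the first task is to verify its hypotheses. I would set $u=\log t$ and compute $x_k'(t)=\tfrac{d}{dt}\bigl(t\,u^{k-1}\bigr)=u^{k-1}+(k-1)u^{k-2}$, a polynomial of degree $k-1$ in $u=\log t$. Since each $x_k'$ is a polynomial in $\log t$ of distinct degree, strict monotonicity of $x_k$ on any interval $0<a_1\le t\le a_n$ should hold (the leading behaviour in $u$ dominates), and the Wronskian-based Chebyshev condition should follow from the fact that $\{1,u,u^2,\dots\}$ is an ECT-system. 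The cleanest route is to change the independent variable from $t$ to $u=\log t$; because $\log$ is a monotone diffeomorphism of $(0,\infty)$ onto $\mathbb{R}$, the osculating-hyperplane construction transforms covariantly, and the Wronskians $W_{j,n}$ become (up to nonvanishing factors $t^{-m}$ coming from $\tfrac{d}{dt}=\tfrac1t\tfrac{d}{du}$) Wronskians of polynomials in $u$, which are classical Vandermonde-type determinants that never vanish. This establishes part (1) of Theorem~\ref{Osc} and hence the existence of a unique intersection point $P=(i_1,\dots,i_n)$.

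The core of the proof is then to identify $i_1=M_1(a_1,\dots,a_n)$ explicitly as Neuman's mean. The key structural observation is that since $x_1(t)=t$, we have $x_1^{-1}(i_1)=i_1$, so I must show that the first coordinate of $P$ equals the right-hand side of~(\ref{1}). The plan is to write down the $n$ osculating-hyperplane equations $\hat{x}\cdot\hat{n}(a_k)=\hat{\alpha}(a_k)\cdot\hat{n}(a_k)$ for $k=1,\dots,n$ and solve the resulting linear system for the $x_1$-coordinate by Cramer's rule, expressing $i_1$ as a ratio of two $n\times n$ determinants. After the substitution $u_k=\log a_k$, both determinants factor as products involving the Vandermonde determinant $\prod_{i<j}(u_j-u_i)$ in the $u_k$, and the numerator should contain an extra symmetric factor that, upon simplification, collapses to the partial-fraction form $\sum_j a_j/\prod_{i\ne j}(u_j-u_i)$ with the constant $(n-1)!$.

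The main obstacle is the determinant bookkeeping: evaluating the Wronskians $\hat{n}(a_k)=\langle W_{1,n}(a_k),-W_{2,n}(a_k),\dots\rangle$ in closed form and then reducing the Cramer ratio to Neuman's symmetric expression. Rather than computing each Wronskian by brute force, I would exploit the polynomial structure in $u$: the vector $\hat{n}(a)$ is, up to sign and a power of $a$, the normal to the osculating hyperplane of the moment-type curve $\langle u^0,u^1,\dots\rangle$-transform, so its components are (signed) elementary-symmetric / binomial expressions in the derivatives of $t\,u^{k-1}$. A convenient device is to recognize that the defining contact conditions make $\hat{n}(a)$ proportional to the coefficient vector of the unique degree-$(n-1)$ polynomial annihilating the derivative data, which ties the whole system directly to polynomial interpolation in the variable $u=\log t$; Neuman's mean is exactly the divided-difference / partial-fraction remainder of such an interpolation, which is how the factor $(n-1)!$ and the product $\prod_{i\ne j}(\ln a_j-\ln a_i)$ arise. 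I expect that once the problem is phrased as interpolation of $t=e^u$ against the polynomial basis $\{u^{k-1}\}$, the identification $i_1=L_N$ follows by matching the Cramer ratio to the standard partial-fraction expansion of $1/\prod(u-u_k)$, with the $n=2$ case $L(a,b)=(b-a)/(\ln b-\ln a)$ serving as the base case and sanity check.
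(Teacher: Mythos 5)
Your second and third paragraphs share the paper's skeleton (write the $n$ hyperplane equations as a linear system and extract $i_1$ by Cramer's rule), but the proposal has two genuine gaps. The first is the appeal to Theorem \ref{Osc}: its hypotheses are simply false for this curve when $0<a_1<\cdots<a_n$ are arbitrary. Since $x_k'(t)=(\log t)^{k-2}(k-1+\log t)$, the component $x_k$ is not strictly monotone on any interval whose interior contains $t=1$ (for $k\ge 3$) or $t=e^{-(k-1)}$ (for $k\ge 2$); for instance $x_3(t)=t(\log t)^2$ decreases on $(e^{-2},1)$ and increases on $(1,\infty)$, so monotonicity fails whenever $a_1<1<a_n$. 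The Chebyshev hypothesis fails as well: by Proposition \ref{P2}, $W_{n-1,n}(t)$ is a positive multiple of $(1+\ln t)\,t^{-(n-1)(n-2)/2}$, which vanishes at $t=1/e$, so even the singleton $\{W_{n-1,n}\}$ is not a Chebyshev system on intervals containing $1/e$. Your claim that after substituting $u=\log t$ the relevant Wronskians ``never vanish'' is exactly what goes wrong: up to powers of $t$ they are truncated exponentials in $u$, which can have real roots. This is precisely why the paper never invokes Theorem \ref{Osc}: existence and uniqueness of $P$ are proved directly, by showing that the coefficient determinant of the linear system equals a nonzero constant times $\prod_{1\le i<j\le n}(\ln a_j-\ln a_i)\big/\prod_{j=1}^n a_j^{(n-1)(n-2)/2}$ (Proposition \ref{P3}), which is nonzero at distinct nodes even though individual entries vanish at isolated points.

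The second gap is that the identification $i_1=L_N$ --- the entire content of the theorem --- is asserted rather than proved: the two determinants in your Cramer ratio are not Vandermonde determinants, and reducing them is where all the work lies (in the paper: the closed form for the $W_{k,n}$, Proposition \ref{P2}, built on the recursions of Section 2, and then two separate inductive determinant evaluations, Propositions \ref{P3} and \ref{P4}, using Lemma \ref{L7}). That said, your closing interpolation idea is sound and can be made rigorous, and it would give a proof genuinely different from (and arguably cleaner than) the paper's: reparametrizing by $u=\log t$ (osculating hyperplanes are invariant under smooth reparametrization) and using Proposition \ref{P2}, the normal at $u_0$ is proportional to the coefficient vector of the polynomial $\sum_{i=0}^{n-1}(u_0-u)^i/i!$, so after clearing common factors the $n$ hyperplane equations say exactly that the polynomial $\Phi(w)=\sum_{k=1}^{n}\tfrac{(-1)^{k-1}p_k}{(k-1)!}\sum_{i=0}^{n-k}\tfrac{w^i}{i!}$, built from the coordinates $p_1,\dots,p_n$ of $P$, satisfies $\Phi(\ln a_j)=a_j$ for every $j$. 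Since $P\mapsto\Phi$ is an invertible triangular linear map and the degree-$(n-1)$ interpolant of $e^w$ at the $n$ distinct nodes $\ln a_j$ is unique, $P$ exists and is unique; comparing leading coefficients, $p_1/(n-1)!$ equals the divided difference of $e^w$ at those nodes, i.e. $p_1=(n-1)!\sum_{j}a_j\big/\prod_{i\ne j}(\ln a_j-\ln a_i)$, which is (\ref{1}). But none of these steps is carried out in the proposal as written: as it stands, the existence/uniqueness claim rests on a theorem whose hypotheses fail, and the evaluation of $i_1$ rests on an unproved expectation.
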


\begin{remark}
Using the curve from Theorem \ref{main} and Theorem \ref{Osc}, one also
obtains means $M_{k}(a_{1},...,a_{n})=x_{k}^{-1}(i_{k}),k=2,...,n$. Of
course those means involve the inverse of the function $y=t{\large (}\log t%
{\large )}^{k},k\geq 2$, which is not an elementary function.
\end{remark}

\section{Preliminary Material}

If $f_{1},..,f_{n}$ are $n$ given functions of $t$, then we let 
\begin{equation*}
W(f_{1},..,f_{n})(t)=\left\vert 
\begin{array}{llll}
f_{1}(t) & f_{2}(t) & \cdots & f_{n}(t) \\ 
f\,_{1}^{\prime }(t) & f\,_{2}^{\prime }(t) & \cdots & f\,_{n}^{\prime }(t)
\\ 
\vdots & \vdots & \cdots & \vdots \\ 
f_{1}^{(n-1)}(t) & f_{2}^{(n-1)}(t) & \cdots & f_{n}^{(n-1)}(t)%
\end{array}%
\right\vert
\end{equation*}
denote the Wronskian determinant. Throughout the rest of the paper, we
define 
\begin{eqnarray}
x_{k}(t) &=&t{\large (}\log t{\large )}^{k-1},k=1,...,n.  \label{2} \\
W_{k,n}(t) &=&W{\large (}x_{1}^{\prime }(t),...,x_{k-1}^{\prime
}(t),x_{k+1}^{\prime }(t),...,x_{n}^{\prime }(t){\large ),}k=1,...,n.  \notag
\end{eqnarray}

\begin{lemma}
\textbf{\label{L1}}For $r\geq 2$ 
\begin{equation}
x_{k+1}^{(r)}(t)=k\sum\limits_{j=1}^{r-1}\dfrac{(-1)^{j+1}(j-1)!\dbinom{r-2}{%
j-1}}{t^{j}}x_{k}^{(r-j)}(t).  \label{3}
\end{equation}
\end{lemma}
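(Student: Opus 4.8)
The plan is to prove formula \eqref{3} by induction on $r$, using a recurrence that expresses $x_{k+1}^{(r)}$ in terms of lower-order derivatives. The starting point is the observation that $x_{k+1}(t)=t(\log t)^{k}=(\log t)\,x_{k}(t)$, so that from $x_{k+1}'(t)=(\log t)^{k}+k(\log t)^{k-1}$ one obtains the $\log$-free identity
\[
t\,x_{k+1}'(t)=x_{k+1}(t)+k\,x_{k}(t).
\]
The crucial feature of this relation is that it contains no $\log t$ factor, so every further derivative will be rational in $t$ together with derivatives of the $x_{m}$, which is exactly the shape of the target formula.

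Next I would differentiate this relation $r-1$ times. On the left the Leibniz rule leaves only two surviving terms, since the second derivative of $t$ vanishes, giving $t\,x_{k+1}^{(r)}+(r-1)x_{k+1}^{(r-1)}$; on the right it gives $x_{k+1}^{(r-1)}+k\,x_{k}^{(r-1)}$. Rearranging yields the recurrence
\[
x_{k+1}^{(r)}(t)=\frac{k}{t}\,x_{k}^{(r-1)}(t)+\frac{2-r}{t}\,x_{k+1}^{(r-1)}(t).
\]
This is the engine of the induction: it reduces the $r$-th derivative of $x_{k+1}$ to the $(r-1)$-st derivatives of $x_{k}$ and of $x_{k+1}$, the latter of which will be eliminated by the inductive hypothesis rather than by any direct computation.

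For the induction, the base case $r=2$ reads $x_{k+1}''=\frac{k}{t}x_{k}'$, which is immediate from the recurrence because the coefficient $2-r$ vanishes. For the inductive step ($r\ge 3$) I would substitute formula \eqref{3} for $x_{k+1}^{(r-1)}$ into the recurrence, absorb the extra factor $1/t$, and reindex the resulting sum by $j\mapsto j+1$. The isolated term $\frac{k}{t}x_{k}^{(r-1)}$ reproduces exactly the $j=1$ term of the target, and matching the coefficient of $x_{k}^{(r-j)}/t^{j}$ for $2\le j\le r-1$ reduces the entire claim to the single binomial identity
\[
(j-1)\binom{r-2}{j-1}=(r-2)\binom{r-3}{j-2},
\]
which is the standard absorption identity $m\binom{n}{m}=n\binom{n-1}{m-1}$.

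I do not anticipate a genuine obstacle here; the argument is essentially careful bookkeeping. The two points that require attention are the reappearance of $x_{k+1}^{(r-1)}$ on the right-hand side of the recurrence — one must let the inductive hypothesis remove it rather than attempt to pass from $x_{k+1}$ to $x_{k}$ in a single step — and keeping the signs together with the shift from $\binom{r-2}{\,\cdot\,}$ to $\binom{r-3}{\,\cdot\,}$ correctly aligned through the reindexing.
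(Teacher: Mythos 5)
Your proof is correct, and while it is still an induction on $r$ with the same base case, its engine is genuinely different from the paper's. Both arguments rest on the same first-order identity, $x_{k+1}'(t)=\tfrac{k}{t}x_{k}(t)+\tfrac{1}{t}x_{k+1}(t)$ (your $t\,x_{k+1}'=x_{k+1}+k\,x_{k}$), but the paper uses it only to seed the base case $r=2$: its inductive step differentiates the claimed formula itself, so the product rule splits each term $t^{-j}x_{k}^{(r-j)}$ in two, yielding two interleaved sums that are merged with Pascal's rule $\binom{r-2}{j-1}+\binom{r-2}{j-2}=\binom{r-1}{j-1}$. You instead make the identity the engine of the whole induction: applying Leibniz $(r-1)$ times to $t\,x_{k+1}'=x_{k+1}+k\,x_{k}$ gives the two-term recurrence $x_{k+1}^{(r)}=\tfrac{k}{t}x_{k}^{(r-1)}+\tfrac{2-r}{t}\,x_{k+1}^{(r-1)}$, the inductive hypothesis eliminates $x_{k+1}^{(r-1)}$, and a single reindexed sum closes via the absorption identity $(j-1)\binom{r-2}{j-1}=(r-2)\binom{r-3}{j-2}$ — which does hold, and your endpoint checks ($j=1$ reproducing the leading term, $j=r-1$ giving $(r-2)!$ on both sides) go through. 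What your route buys is that the differentiation is performed once, on the simple product $t\,x_{k+1}'$, rather than term by term on the full sum, so only one sum ever needs manipulating; the cost is carrying the mixed recurrence in both $x_{k}$ and $x_{k+1}$ and keeping its sign $(2-r)$ straight. The paper's route avoids introducing that recurrence but pays with the heavier two-sum bookkeeping. Your base case is also slightly slicker: it falls out of the recurrence because the coefficient $2-r$ vanishes at $r=2$, whereas the paper computes $x_{k+1}''$ directly.
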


\begin{proof}
We use induction in $r$. So suppose that (\ref{3}) holds for some positive
integer $r\geq 2$.

$x_{k+1}^{(r+1)}(t)=\dfrac{d}{dt}x_{k+1}^{(r)}(t)=k\dfrac{d}{dt}%
\dsum\limits_{j=1}^{r-1}\tfrac{(-1)^{j+1}(j-1)!\tbinom{r-2}{j-1}}{t^{j}}%
x_{k}^{(r-j)}(t)=$

$k\dsum\limits_{j=1}^{r-1}(-1)^{j+1}(j-1)!\tbinom{r-2}{j-1}\dfrac{d}{dt}%
{\large (}t^{-j}x_{k}^{(r-j)}(t){\large )}=$

$k\dsum\limits_{j=1}^{r-1}(-1)^{j+1}(j-1)!\tbinom{r-2}{j-1}{\large (}%
t^{-j}x_{k}^{(r-j+1)}(t)-jt^{-j-1}x_{k}^{(r-j)}(t){\large )}=$

$k\dsum\limits_{j=1}^{r-1}\tfrac{(-1)^{j+1}(j-1)!\tbinom{r-2}{j-1}}{t^{j}}%
x_{k}^{(r+1-j)}(t)-k\dsum\limits_{j=1}^{r-1}\tfrac{(-1)^{j+1}j!\tbinom{r-2}{%
j-1}}{t^{j+1}}x_{k}^{(r-j)}(t)=$

$k\dsum\limits_{j=1}^{r-1}\tfrac{(-1)^{j+1}(j-1)!\tbinom{r-2}{j-1}}{t^{j}}%
x_{k}^{(r+1-j)}(t)-k\dsum\limits_{l=2}^{r}\tfrac{(-1)^{l}(l-1)!\tbinom{r-2}{%
l-2}}{t^{l}}x_{k}^{(r+1-l)}(t)=$

$k\dsum\limits_{j=1}^{r-1}\tfrac{(-1)^{j+1}(j-1)!\tbinom{r-2}{j-1}}{t^{j}}%
x_{k}^{(r+1-j)}(t)+k\dsum\limits_{j=2}^{r}\tfrac{(-1)^{j+1}(j-1)!\tbinom{r-2%
}{j-2}}{t^{j}}x_{k}^{(r+1-j)}(t)$.

Using the identity $\tbinom{r-2}{j-1}+\tbinom{r-2}{j-2}=\tbinom{r-1}{j-1}$,
we have 
\begin{equation*}
(j-1)!\tbinom{r-2}{j-1}+(j-1)!\tbinom{r-2}{j-2}=(j-1)!\tbinom{r-1}{j-1},
\end{equation*}%
which implies that

$k\dsum\limits_{j=1}^{r-1}\tfrac{(-1)^{j+1}(j-1)!\tbinom{r-2}{j-1}}{t^{j}}%
x_{k}^{(r+1-j)}(t)+k\dsum\limits_{j=2}^{r}\tfrac{(-1)^{j+1}(j-1)!\tbinom{r-2%
}{j-2}}{t^{j}}x_{k}^{(r+1-j)}(t)=$

$k\dsum\limits_{j=2}^{r-1}\tfrac{(-1)^{j+1}(j-1)!\tbinom{r-1}{j-1}}{t^{j}}%
x_{k}^{(r+1-j)}(t)+\dfrac{k}{t}x_{k}^{(r)}(t)+\dfrac{k}{t^{r}}%
(-1)^{r+1}(r-1)!x_{k}^{(1)}(t)=$

$k\dsum\limits_{j=1}^{r}\dfrac{(-1)^{j+1}(j-1)!\tbinom{r-1}{j-1}}{t^{j}}%
x_{k}^{(r+1-j)}(t)$.

To start the induction, $x_{k+1}^{\prime }(t)=\dfrac{k}{t}x_{k}(t)+\dfrac{1}{%
t}x_{k+1}(t)$, which implies that

$x_{k+1}^{\prime \prime }(t)=\dfrac{k}{t}x_{k}^{\prime }(t)-\dfrac{k}{t^{2}}%
x_{k}(t)+\dfrac{1}{t}x_{k+1}^{\prime }(t)-\dfrac{1}{t^{2}}x_{k+1}(t)=\dfrac{k%
}{t}x_{k}^{\prime }(t)-\dfrac{k}{t^{2}}x_{k}(t)+\dfrac{1}{t}\left( \dfrac{k}{%
t}x_{k}(t)+\dfrac{1}{t}x_{k+1}(t)\right) -\dfrac{1}{t^{2}}x_{k+1}(t)=$

$\dfrac{k}{t}x_{k}^{\prime }(t)-\dfrac{k}{t^{2}}x_{k}(t)+\dfrac{k}{t^{2}}%
x_{k}(t)+\dfrac{1}{t^{2}}x_{k+1}(t)-\dfrac{1}{t^{2}}x_{k+1}(t)=\dfrac{k}{t}%
x_{k}^{\prime }(t)$, which is (\ref{3}) with $r=2$.
\end{proof}

\begin{notation}
\qquad Let $a_{r,j}=(-1)^{j+1}(j-1)!\dbinom{r-2}{j-1}$. Then Lemma \ref{L1}
can be written
\end{notation}

\begin{equation}
x_{k+1}^{(r)}(t)=k\sum\limits_{j=1}^{r-1}\dfrac{a_{r,j}}{t^{j}}%
x_{k}^{(r-j)}(t).  \label{4}
\end{equation}

\begin{lemma}
\textbf{\label{L2}}For $k\geq 2$, $x_{k}^{(r)}(1)=0$ for any $r\leq k-2$,
and for $r\geq 2$, $x_{r}^{(r-1)}(1)=(r-1)!$
\end{lemma}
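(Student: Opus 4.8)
The plan is to prove both assertions simultaneously by induction on $k$, using the recursion (\ref{4}) evaluated at $t=1$. The key simplification is that at $t=1$ every factor $1/t^{j}$ in (\ref{4}) equals $1$, so the recursion collapses to the numerical relation $x_{k+1}^{(r)}(1)=k\sum_{j=1}^{r-1}a_{r,j}\,x_{k}^{(r-j)}(1)$ for $r\geq 2$, expressing the derivatives of $x_{k+1}$ at $1$ as a fixed linear combination of derivatives of $x_{k}$ at $1$ of \emph{strictly lower} order (the highest order appearing is $r-1$). The two claims together amount to saying that $x_{k}$ vanishes to order exactly $k-1$ at $t=1$ with leading Taylor coefficient $1$, i.e. $x_{k}^{(r)}(1)=0$ for $0\le r\le k-2$ and $x_{k}^{(k-1)}(1)=(k-1)!$; I would carry this strengthened statement through the induction.

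For the base case $k=2$ I would compute directly: $x_{2}(t)=t\log t$ gives $x_{2}(1)=0$ and $x_{2}'(1)=\log 1+1=1=1!$, which is exactly the claim for $k=2$ (Part 1 has only the instance $r=0$, and Part 2 is the statement $x_{2}^{(1)}(1)=1!$). For the inductive step, assume the strengthened statement for $k$. The orders $r=0$ and $r=1$ must be treated by hand, since (\ref{4}) requires $r\ge 2$: here $x_{k+1}(1)=1\cdot 0^{k}=0$, and from $x_{k+1}'(t)=\tfrac{k}{t}x_{k}(t)+\tfrac{1}{t}x_{k+1}(t)$ one gets $x_{k+1}'(1)=k\,x_{k}(1)+x_{k+1}(1)=0$ for $k\ge 2$. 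For $2\le r\le k-1$, every term $x_{k}^{(r-j)}(1)$ in the collapsed recursion has order $r-j\le r-1\le k-2$ and so vanishes by hypothesis, giving $x_{k+1}^{(r)}(1)=0$; this establishes Part 1 for $k+1$. Finally, taking $r=k$, only the single term with $j=1$ involves $x_{k}^{(k-1)}(1)$ (order $k-1$), while all terms with $j\ge 2$ have order $\le k-2$ and drop out; since $a_{k,1}=(-1)^{2}0!\binom{k-2}{0}=1$, this yields $x_{k+1}^{(k)}(1)=k\,a_{k,1}\,x_{k}^{(k-1)}(1)=k\cdot(k-1)!=k!$, which is Part 2 for $k+1$.

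I expect the only delicate point to be the bookkeeping of index ranges—confirming that for $2\le r\le k-1$ the maximal derivative order $r-1$ stays below the threshold $k-1$, and that at $r=k$ exactly one term survives—together with the slightly awkward split between the recursion (valid only for $r\ge 2$) and the separately handled orders $r=0,1$. As an independent check of the leading coefficient, one can note that $\log t=(t-1)+O((t-1)^{2})$ and $t=1+(t-1)$ give $x_{k}(t)=t(\log t)^{k-1}=(t-1)^{k-1}+O((t-1)^{k})$, so $x_{k}$ vanishes to order exactly $k-1$ at $t=1$ and its $(k-1)$st Taylor coefficient is $1$, which reproduces both $x_{k}^{(r)}(1)=0$ for $r\le k-2$ and $x_{k}^{(k-1)}(1)=(k-1)!$ without invoking the recursion.
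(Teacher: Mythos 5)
Your proof is correct and takes essentially the same approach as the paper: both evaluate the recursion (\ref{4}) at $t=1$, kill every term of order at most $k-2$ by the inductive hypothesis, and extract the factorial from the single surviving $j=1$ term using $a_{r,1}=1$; the only structural difference is that you run one simultaneous induction on $k$ carrying both claims, whereas the paper proves Part 1 by induction on $k$ and then Part 2 by a separate induction on $r$ that invokes Part 1. Your closing observation that $x_{k}(t)=(t-1)^{k-1}+O\left((t-1)^{k}\right)$ near $t=1$ is, on its own, a more elementary proof of the entire lemma, which the paper does not mention.
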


\begin{proof}
For the first part, we use induction in $k$. So assume that $%
x_{k}^{(l)}(1)=0 $ for $l\leq k-2$. By (\ref{4}), $x_{k+1}^{(r)}(1)=k\sum%
\limits_{j=1}^{r-1}a_{r,j}x_{k}^{(r-j)}(1)$. Suppose that $r\leq k-1$. Then $%
r-j\leq k-j-1\leq k-2$, which implies that $x_{k}^{(r-j)}(1)=0$; Thus $%
x_{k+1}^{(r)}(1)=0$ whenever $r\leq (k+1)-2$. To start the induction,
consider $x_{k}^{\prime }(t)={\large (}\log t{\large )}^{k-2}{\large (}%
k-1+\log t{\large )}\Rightarrow x_{k}^{\prime }(1)=0$ for $k\geq 3$. $%
x_{k+1}^{\prime \prime }(t)=\dfrac{k}{t}x_{k}^{\prime }(t)$ for $k\geq
1\Rightarrow x_{k}^{\prime \prime }(1)=0$ for $k\geq 4$.

For the second part, we use induction in $r$. So assume that $%
x_{r}^{(r-1)}(1)=(r-1)!$. By (\ref{4}), $x_{r+1}^{(r)}(1)=r\sum%
\limits_{j=1}^{r-1}a_{r,j}x_{r}^{(r-j)}(1)=r\left(
a_{r,1}x_{r}^{(r-1)}(1)+\sum\limits_{j=2}^{r-1}a_{r,j}x_{r}^{(r-j)}(1)%
\right) =r(r-1)!$ since $x_{r}^{(r-j)}(1)=0$ for $j\geq 2$ by the first part
of Lemma \ref{L2} just proven. Thus $x_{r+1}^{(r)}(1)=r!$. To start the
induction, $x_{2}^{\prime }(1)=\allowbreak 1=1!$
\end{proof}

\begin{lemma}
\label{L3}$\dsum\limits_{k=1}^{n}\dfrac{(-1)^{k-1}}{(n+1-k)!(k-1)!}=\dfrac{%
(-1)^{n+1}}{n!}$
\end{lemma}

\begin{proof}
Follows immediately from the binomial expansion of $1+x$ with $x=-1$ and we
omit the details.
\end{proof}

\begin{lemma}
\label{L4}$\dsum\limits_{k=1}^{n}\left[ (-1)^{k-1}\dfrac{1}{(k-1)!}\left(
\dsum\limits_{j=0}^{n-k}\dfrac{x^{n-1-j}}{(n-k-j)!}\right) \right] =1$ and

$\dsum\limits_{k=2}^{n}\left[ (-1)^{k-1}\dfrac{1}{(k-2)!}\dsum%
\limits_{j=0}^{n-k}\dfrac{x^{n-j-2}}{(n-k-j)!}\right] =-1$
\end{lemma}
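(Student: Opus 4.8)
The plan is to treat each identity as a polynomial identity in $x$ and prove it by collecting, for each power of $x$, its total coefficient and showing that every nonconstant coefficient vanishes while the constant term alone produces the claimed value. Both left-hand sides are double sums of monomials in $x$, so the natural first move is to reindex each inner sum so that the exponent of $x$ becomes the summation variable, and then to interchange the order of summation so that the outer variable is precisely that exponent.

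For the first identity I would set $m=n-1-j$ in the inner sum, which turns the denominator factor $(n-k-j)!$ into $(m-k+1)!$ and lets $m$ range over $k-1\le m\le n-1$. After swapping the order of summation (so that, for a fixed $m$ with $0\le m\le n-1$, the index $k$ runs over $1\le k\le m+1$), the coefficient of $x^{m}$ becomes
\begin{equation*}
\sum_{k=1}^{m+1}(-1)^{k-1}\frac{1}{(k-1)!\,(m-k+1)!}=\frac{1}{m!}\sum_{i=0}^{m}(-1)^{i}\binom{m}{i}=\frac{1}{m!}(1-1)^{m},
\end{equation*}
where $i=k-1$. This equals $0$ for $m\ge 1$ and $1$ for $m=0$, so the only surviving contribution is the constant $1$. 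The second identity is handled in exactly the same way with $m=n-j-2$: the denominator factor becomes $(m-k+2)!$, the range of $k$ for fixed $m$ is $2\le k\le m+2$, and after the substitution $i=k-2$ the coefficient of $x^{m}$ reduces to $-\frac{1}{m!}(1-1)^{m}$, so the whole sum collapses to $-1$.

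The computation is essentially bookkeeping, so there is no deep obstacle; the only place where care is genuinely required is tracking the summation limits after the reindexing and interchange, making sure the range of $k$ for a fixed $m$ is exactly $1\le k\le m+1$ (respectively $2\le k\le m+2$) and that these bounds stay within $1\le k\le n$. The crucial algebraic fact is the alternating binomial identity $\sum_{i=0}^{m}(-1)^{i}\binom{m}{i}=(1-1)^{m}$, which is the same elementary cancellation already invoked in Lemma \ref{L3}.
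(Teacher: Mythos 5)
Your proof is correct, and it takes a genuinely different route from the paper's. The paper proves the first identity by induction on $n$: writing $d_{n}$ for the left-hand side, it peels off the $k=n+1$ term and the $j=0$ terms of the inner sums in $d_{n+1}$, and uses Lemma \ref{L3} to cancel the two resulting $x^{n}$ contributions, leaving $d_{n+1}=d_{n}=1$; the second identity is then dismissed as ``similar'' with the proof omitted. You instead give a direct, non-inductive argument: reindexing by the exponent $m=n-1-j$ (resp.\ $m=n-j-2$) and interchanging the order of summation, the coefficient of $x^{m}$ collapses to
\begin{equation*}
\sum_{i=0}^{m}\frac{(-1)^{i}}{i!\,(m-i)!}=\frac{(1-1)^{m}}{m!}
\end{equation*}
(resp.\ its negative), which vanishes for $m\ge 1$ and leaves only the constant term $1$ (resp.\ $-1$); your bookkeeping of the summation ranges ($1\le k\le m+1$, resp.\ $2\le k\le m+2$, both automatically within $[1,n]$) is accurate. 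What your approach buys: it handles both identities uniformly and in full detail, avoids induction entirely, and makes transparent the structural reason the identities hold, namely that each left-hand side is literally the constant polynomial because every nonconstant coefficient is an alternating binomial sum. What the paper's approach buys: it reuses Lemma \ref{L3} as a black box and matches the inductive style of the surrounding lemmas, at the cost of leaving the second identity unproved. The two arguments ultimately rest on the same cancellation, since Lemma \ref{L3} is itself just the alternating binomial expansion of $(1-1)^{n}$ with one term moved across the equality.
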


\begin{proof}
Let $d_{n}=\dsum\limits_{k=1}^{n}\left[ (-1)^{k-1}\tfrac{1}{(k-1)!}%
\dsum\limits_{j=0}^{n-k}\tfrac{x^{n-1-j}}{(n-k-j)!}\right] $. Using
induction in $n$, we assume that $d_{n}=1$.

$d_{n+1}=\dsum\limits_{k=1}^{n+1}\left[ (-1)^{k-1}\tfrac{1}{(k-1)!}\left(
\dsum\limits_{j=0}^{n+1-k}\tfrac{x^{n-j}}{(n+1-k-j)!}\right) \right] =$

$(-1)^{n}\dfrac{1}{n!}x^{n}+\dsum\limits_{k=1}^{n}\left[ (-1)^{k-1}\tfrac{1}{%
(k-1)!}\left( \dsum\limits_{j=0}^{n+1-k}\tfrac{x^{n-j}}{(n+1-k-j)!}\right) %
\right] =$

$(-1)^{n}\dfrac{1}{n!}x^{n}+\dsum\limits_{k=1}^{n}\left[ (-1)^{k-1}\tfrac{1}{%
(k-1)!}\left( \tfrac{x^{n}}{(n+1-k)!}+\dsum\limits_{j=1}^{n+1-k}\tfrac{%
x^{n-j}}{(n+1-k-j)!}\right) \right] =$

$(-1)^{n}\dfrac{1}{n!}x^{n}+x^{n}\dsum\limits_{k=1}^{n}\tfrac{(-1)^{k-1}}{%
(n+1-k)!(k-1)!}+\dsum\limits_{k=1}^{n}\left[ (-1)^{k-1}\tfrac{1}{(k-1)!}%
\dsum\limits_{j=0}^{n-k}\tfrac{x^{n-1-j}}{(n-k-j)!}\right] =$

$(-1)^{n}\dfrac{1}{n!}x^{n}+x^{n}\dsum\limits_{k=1}^{n}\tfrac{(-1)^{k-1}}{%
(n+1-k)!(k-1)!}+1=(-1)^{n}\dfrac{1}{n!}x^{n}+x^{n}\dfrac{(-1)^{n+1}}{n!}+1$%
(by Lemma \ref{L3}) $=\allowbreak 1$.

Since $d_{1}=1$, that completes the proof of the first part of Lemma \ref{L4}%
. The proof of the second part of Lemma \ref{L4} is similar and we omit it.
The second part of Lemma \ref{L4} also follows from the first part after
some manipulations and Lemma \ref{L3}.
\end{proof}

Before proving one of our main results, we introduce the functions $v_{k,n}$
below.

\begin{lemma}
\label{L6}For $1\leq k\leq n$ and $n\geq $ $3$, let 
\begin{equation*}
v_{k,n}(t)=\dfrac{\dprod\limits_{r=0}^{n-1}r!}{(k-1)!}\dfrac{1}{%
t^{(n-2)(n-1)/2}}\dsum\limits_{j=0}^{n-k}\dfrac{\left( \ln t\right) ^{n-k-j}%
}{(n-k-j)!},t>0.
\end{equation*}

Then 
\begin{eqnarray*}
v_{k+1,n+1}(t) &=&\dfrac{n!}{k}\dfrac{1}{t^{n-1}}v_{k,n}(t) \\
v_{1,n+1}(t) &=&\left( \dprod\limits_{r=0}^{n-1}r!\right) \dfrac{\left( \ln
t\right) ^{n}}{t^{n(n-1)/2}}+\dfrac{n!}{t^{n-1}}v_{1,n}(t).
\end{eqnarray*}
\end{lemma}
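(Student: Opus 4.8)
The plan is to verify both identities by direct algebraic manipulation of the definition of $v_{k,n}$, after first stripping away the notational bulk. I would begin by setting $P_n = \prod_{r=0}^{n-1} r!$ and $e_n = (n-2)(n-1)/2$, and recording the two elementary facts $P_{n+1} = n!\,P_n$ and $e_{n+1} = e_n + (n-1)$ (the latter because $e_n = \binom{n-1}{2}$). The one genuinely clarifying move is to reindex the sum appearing in $v_{k,n}$: substituting $m = n-k-j$ turns $\sum_{j=0}^{n-k} (\ln t)^{n-k-j}/(n-k-j)!$ into $\sum_{m=0}^{n-k} (\ln t)^m/m!$, so the sum depends on $k$ and $n$ only through the difference $n-k$. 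Writing $T_p(t) = \sum_{m=0}^p (\ln t)^m/m!$, the definition becomes the compact form $v_{k,n}(t) = \frac{P_n}{(k-1)!}\,t^{-e_n}\,T_{n-k}(t)$, which I would use throughout.

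For the first recurrence I would compute both sides in this compact form. The left side is $v_{k+1,n+1}(t) = \frac{P_{n+1}}{k!}\,t^{-e_{n+1}}\,T_{n-k}(t)$, while the right side equals $\frac{n!}{k}\,t^{-(n-1)}\cdot\frac{P_n}{(k-1)!}\,t^{-e_n}\,T_{n-k}(t)$. The two sums are literally identical (both $T_{n-k}$, since $(n+1)-(k+1) = n-k$), the scalar constants match because $\frac{n!}{k}\cdot\frac{P_n}{(k-1)!} = \frac{n!\,P_n}{k!} = \frac{P_{n+1}}{k!}$, and the powers of $t$ match because $(n-1)+e_n = e_{n+1}$. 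This proves the first identity.

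For the second recurrence the sums no longer coincide, so one extra step is needed. In compact form the left side is $v_{1,n+1}(t) = P_{n+1}\,t^{-e_{n+1}}\,T_n(t)$, and after factoring out $t^{-e_{n+1}}$ the right side becomes $t^{-e_{n+1}}\big[P_n (\ln t)^n + P_{n+1}\,T_{n-1}(t)\big]$ (using $n(n-1)/2 = e_{n+1}$ for the first term, and $(n-1)+e_n = e_{n+1}$ together with $n!\,P_n = P_{n+1}$ for the second). Matching the two sides then reduces to the single telescoping identity $P_{n+1}\big(T_n(t) - T_{n-1}(t)\big) = P_n (\ln t)^n$, which holds because $T_n(t) - T_{n-1}(t) = (\ln t)^n/n!$ and $P_{n+1}/n! = P_n$.

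There is no serious obstacle here: both statements are formal consequences of the definition. The only points requiring care are the bookkeeping of the factorial product $P_n$ and the triangular exponent $e_n$ across the index shift $n \to n+1$, and recognizing the reindexing that collapses the sum to $T_{n-k}$; once those are in place the identities fall out immediately, with the second needing only the one-term difference $T_n - T_{n-1} = (\ln t)^n/n!$.
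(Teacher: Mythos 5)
Your proof is correct and follows essentially the same route as the paper's: both identities are verified by direct algebraic manipulation of the definition, matching the factorial constants and powers of $t$ across the shift $n\to n+1$, and handling the second identity by splitting off the top term $(\ln t)^n/n!$ of the sum (your $T_n - T_{n-1}$ step is exactly the paper's separation of the $j=0$ term). Your bookkeeping with $P_n$, $e_n$, and $T_{n-k}$ is merely a cleaner packaging of the same computation.
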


\begin{proof}
$v_{k+1,n+1}(t)=\tfrac{\tprod\limits_{r=0}^{n-1}r!}{k!}\tfrac{\left( \ln
t\right) ^{n-k-j}}{t^{n(n-1)/2}}\dsum\limits_{j=0}^{n-k}\tfrac{1}{(n-k-j)!}=$

$\tfrac{\tprod\limits_{r=0}^{n-1}r!}{k!}\dfrac{\left( \ln t\right) ^{n-k-j}}{%
t^{(n-1)(n-2)/2}t^{n-1}}\dsum\limits_{j=0}^{n-k}\tfrac{1}{(n-k-j)!}=$

$\tfrac{n!}{k}\tfrac{1}{t^{n-1}}\tfrac{\tprod\limits_{r=0}^{n-1}r!}{(k-1)!}%
\tfrac{1}{t^{(n-2)(n-1)/2}}\dsum\limits_{j=0}^{n-k}\tfrac{\left( \ln
t\right) ^{n-k-j}}{(n-k-j)!}=\tfrac{n!}{k}\tfrac{1}{t^{n-1}}v_{k,n}(t)$.

$v_{1,n}(t)=\left( \tprod\limits_{r=0}^{n-1}r!\right) \tfrac{1}{%
t^{(n-2)(n-1)/2}}\dsum\limits_{j=0}^{n-1}\tfrac{\left( \ln t\right) ^{n-1-j}%
}{(n-1-j)!}\Rightarrow v_{1,n+1}(t)=$

$\left( \tprod\limits_{r=0}^{n}r!\right) \tfrac{1}{t^{n(n-1)/2}}%
\dsum\limits_{j=0}^{n}\tfrac{\left( \ln t\right) ^{n-j}}{(n-j)!}=$

$\left( \tprod\limits_{r=0}^{n}r!\right) \dfrac{1}{t^{n(n-1)/2}}\left( 
\tfrac{\left( \ln t\right) ^{n}}{n!}+\dsum\limits_{j=1}^{n}\dfrac{\left( \ln
t\right) ^{n-j}}{(n-j)!}\right) =$

$\left( \tprod\limits_{r=0}^{n}r!\right) \tfrac{\left( \ln t\right) ^{n}}{%
t^{n(n-1)/2}}\left( \tfrac{1}{n!}+\dsum\limits_{j=0}^{n-1}\tfrac{\left( \ln
t\right) ^{n-1-j}}{(n-1-j)!}\right) =$

$\left( \tprod\limits_{r=0}^{n-1}r!\right) \dfrac{\left( \ln t\right) ^{n}}{%
t^{n(n-1)/2}}+\left( \tprod\limits_{r=0}^{n}r!\right) \dfrac{1}{t^{n(n-1)/2}}%
\dsum\limits_{j=0}^{n-1}\tfrac{\left( \ln t\right) ^{n-1-j}}{(n-1-j)!}=$

$\left( \tprod\limits_{r=0}^{n-1}r!\right) \tfrac{\left( \ln t\right) ^{n}}{%
t^{n(n-1)/2}}+\left( \tprod\limits_{r=0}^{n-1}r!\right) \dfrac{n!}{t^{n-1}}%
\dfrac{1}{t^{(n-2)(n-1)/2}}\dsum\limits_{j=0}^{n-1}\tfrac{\left( \ln
t\right) ^{n-1-j}}{(n-1-j)!}=$

$\left( \tprod\limits_{r=0}^{n-1}r!\right) \tfrac{\left( \ln t\right) ^{n}}{%
t^{n(n-1)/2}}+\tfrac{n!}{t^{n-1}}v_{1,n}(t)$.
\end{proof}

\begin{proposition}
\label{P1}Let $v_{k,n}(t)$ be the functions from Lemma \ref{L6}, and define
the vector functions

\begin{eqnarray*}
\hat{\alpha}(t) &=&\langle x_{1}(t),...,x_{n}(t)\rangle \\
\hat{v}_{n}(t) &=&\langle
v_{1,n}(t),-v_{2,n}(t),...,(-1)^{n-1}v_{n,n}(t)\rangle .
\end{eqnarray*}%
Then 
\begin{eqnarray*}
\hat{\alpha}(t)\cdot \hat{v}_{n}(t) &=&\dfrac{\dprod\limits_{r=0}^{n-1}r!}{%
t^{n(n-3)/2}} \\
\hat{\alpha}^{(j)}(t)\cdot \hat{v}_{n}(t) &=&0\ \text{for }j=1,...,n-1.
\end{eqnarray*}
\end{proposition}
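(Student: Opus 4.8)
The plan is to treat both assertions as statements about the single family of sums
\[ S_j(t)=\hat\alpha^{(j)}(t)\cdot\hat v_n(t)=\sum_{k=1}^n(-1)^{k-1}x_k^{(j)}(t)\,v_{k,n}(t),\qquad j=0,1,\dots,n-1, \]
and to show that $S_0(t)=\bigl(\prod_{r=0}^{n-1}r!\bigr)/t^{n(n-3)/2}$ while $S_j(t)=0$ for $1\le j\le n-1$. The cases $j=0$ and $j=1$ I would dispatch directly from Lemma \ref{L4}, and the remaining cases $j\ge 2$ I would obtain by induction on $n$ using the recursions in Lemma \ref{L1} and Lemma \ref{L6}.

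For $j=0$, I substitute $x_k(t)=t(\ln t)^{k-1}$ and the definition of $v_{k,n}$. The factor $t$ from $x_k$ lowers the power of $t$ by one, and the crucial simplification is $(\ln t)^{k-1}(\ln t)^{n-k-j}=(\ln t)^{n-1-j}$, which collapses the $k$-dependence of the exponent. After pulling out $\prod_{r=0}^{n-1}r!$ and the common power of $t$, the double sum that remains is exactly the left side of the first identity of Lemma \ref{L4} with $x=\ln t$, hence equals $1$; a short check that $(n-2)(n-1)/2-1=n(n-3)/2$ then gives the asserted value of $S_0$. For $j=1$ I write $x_k'(t)=(\ln t)^{k-1}+(k-1)(\ln t)^{k-2}$ and split $S_1=A+B$ accordingly. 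The first piece $A$ reproduces the first identity of Lemma \ref{L4} and equals $\bigl(\prod_{r=0}^{n-1}r!\bigr)/t^{(n-2)(n-1)/2}$; the second piece $B$, after the simplification $(k-1)/(k-1)!=1/(k-2)!$, reproduces the second identity of Lemma \ref{L4} and equals the negative of the same quantity, so $A+B=0$.

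For $2\le j\le n-1$ I argue by induction on $n$, passing from dimension $n$ to dimension $n+1$. Splitting off the $k=1$ term and reindexing the rest by $k=m+1$, I substitute $v_{m+1,n+1}=\tfrac{n!}{m}t^{-(n-1)}v_{m,n}$ from Lemma \ref{L6} and $x_{m+1}^{(j)}=m\sum_{i=1}^{j-1}a_{j,i}t^{-i}x_m^{(j-i)}$ from Lemma \ref{L1}. The factor $m$ from Lemma \ref{L1} cancels the $1/m$ from Lemma \ref{L6}, and interchanging the order of summation identifies the inner sum over $m$ as $-S_{j-i}^{(n)}$, yielding
\[ S_j^{(n+1)}(t)=x_1^{(j)}(t)\,v_{1,n+1}(t)-\frac{n!}{t^{n-1}}\sum_{i=1}^{j-1}\frac{a_{j,i}}{t^i}\,S_{j-i}^{(n)}(t). \]
Since $x_1(t)=t$ forces $x_1^{(j)}\equiv 0$ for $j\ge 2$, and since $1\le j-i\le n-1$ means every $S_{j-i}^{(n)}$ vanishes by the inductive hypothesis (the case $j-i=1$ being covered by the direct computation above), every term on the right is zero. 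The base case $n=3$ requires only the single value $j=2$, which I verify by a direct three-term computation.

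The step I expect to be most delicate is the $j=1$ case: it is the only one in which a nonzero contribution from the leading power of $\ln t$ must be cancelled against a contribution from the next power, and this cancellation relies on using \emph{both} identities of Lemma \ref{L4} simultaneously, together with the rewriting $(k-1)/(k-1)!=1/(k-2)!$. The $j\ge 2$ reduction, by contrast, is essentially formal once the two recursions are aligned; the only genuine points are confirming that the boundary term $x_1^{(j)}v_{1,n+1}$ drops out and that the shifted indices $j-i$ remain in the range where the inductive hypothesis applies.
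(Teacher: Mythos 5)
Your proposal is correct and follows essentially the same route as the paper's own proof: the $j=0$ and $j=1$ cases are dispatched via the two identities of Lemma \ref{L4} with $x=\ln t$ (including the same split of $x_k'$ and the rewriting $(k-1)/(k-1)!=1/(k-2)!$), and the cases $j\ge 2$ by induction on $n$ combining the recursions of Lemma \ref{L1} and Lemma \ref{L6}, with the boundary term annihilated by $x_1^{(j)}\equiv 0$ and the reindexed inner sums vanishing by the inductive hypothesis. The only cosmetic difference is the base case: the paper starts the induction trivially at $n=1$, whereas you verify $n=3$, $j=2$ by direct computation, which if anything sits more comfortably with the restriction $n\ge 3$ in Lemma \ref{L6}.
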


\begin{remark}
$\hat{\alpha}$ depends on $n$ as does $\hat{v}_{n}$, but we suppress this
dependence in our notation for convenience.
\end{remark}

\begin{proof}
Case 1: $j=0$

$\vec{\alpha}(t)\cdot \hat{v}_{n}(t)=\sum%
\limits_{k=1}^{n}(-1)^{k-1}x_{k}(t)v_{k,n}(t)=\sum%
\limits_{k=1}^{n}(-1)^{k-1}t{\large (}\ln t{\large )}^{k-1}v_{k,n}(t)=$

$\left( \tprod\limits_{r=0}^{n-1}r!\right) \dfrac{t}{t^{(n-2)(n-1)/2}}%
\dsum\limits_{k=1}^{n}\left[ (-1)^{k-1}\tfrac{1}{(k-1)!}\left(
\dsum\limits_{j=0}^{n-k}\tfrac{\left( \ln t\right) ^{n-1-j}}{(n-k-j)!}%
\right) \right] =\dfrac{\tprod\limits_{r=0}^{n-1}r!}{t^{n(n-3)/2}}$ by Lemma %
\ref{L4} with $x=\ln t$.

Case 2: $j=1$

$x_{k}(t)=t{\large (}\log t{\large )}^{k-1}\Rightarrow x_{k}^{\prime
}(t)=t(k-1){\large (}\ln t{\large )}^{k-2}\dfrac{1}{t}+{\large (}\ln t%
{\large )}^{k-1}\Rightarrow $

$x_{k}^{\prime }(t)={\large (}\ln t{\large )}^{k-2}{\large (}k-1+\ln t%
{\large )}$.

Note that if $k=1$, $x_{k}^{\prime }(t)=1$ for all $t$, including $t=1$.

$\hat{\alpha}^{\prime }(t)\cdot \hat{v}(t)=\dsum%
\limits_{k=1}^{n}(-1)^{k-1}x_{k}^{\prime
}(t)v_{k,n}(t)=\dsum\limits_{k=1}^{n}(-1)^{k-1}{\large (}\ln t{\large )}%
^{k-2}{\large (}k-1+\ln t{\large )}v_{k,n}(t)=$

$\dfrac{\tprod\limits_{r=0}^{n-1}r!}{t^{(n-2)(n-1)/2}}\dsum\limits_{k=1}^{n}%
\left[ (-1)^{k-1}\dfrac{1}{(k-1)!}{\large (}k-1+\ln t{\large )}\left(
\dsum\limits_{j=0}^{n-k}\dfrac{\left( \ln t\right) ^{n-j-2}}{(n-k-j)!}%
\right) \right] =$

\begin{gather*}
\dfrac{\tprod\limits_{r=0}^{n-1}r!}{t^{(n-2)(n-1)/2}}\times {\large (}%
\dsum\limits_{k=2}^{n}\left[ (-1)^{k-1}\tfrac{1}{(k-2)!}\dsum%
\limits_{j=0}^{n-k}\tfrac{\left( \ln t\right) ^{n-j-2}}{(n-k-j)!}\right] + \\
\dsum\limits_{k=1}^{n}\left[ (-1)^{k-1}\tfrac{1}{(k-1)!}\dsum%
\limits_{j=0}^{n-k}\tfrac{\left( \ln t\right) ^{n-j-1}}{(n-k-j)!}\right] 
{\large )}=0
\end{gather*}

by Lemma \ref{L4} with $x=\ln t$.

Case 3: $2\leq j\leq n$(note that $j$ is fixed here)

We use induction in $n$. So assume that $\hat{\alpha}^{(l)}(t)\cdot \hat{v}%
_{n}(t)=\sum\limits_{k=1}^{n}(-1)^{k-1}x_{k}^{(l)}(t)v_{k,n}(t)=0$ for all $%
l=1,...,n-1$. We have to show that $\hat{\alpha}^{(j)}(t)\cdot \hat{v}%
_{n+1}(t)=0$.

Now $\hat{\alpha}^{(j)}(t)\cdot \hat{v}_{n+1}(t)=$ $\sum%
\limits_{k=1}^{n+1}(-1)^{k-1}x_{k}^{(j)}(t)v_{k,n+1}(t)=\sum%
\limits_{k=0}^{n}(-1)^{k}x_{k+1}^{(j)}(t)v_{k+1,n+1}(t)=x_{1}^{(j)}(t)v_{1,n+1}(t)+\sum\limits_{k=1}^{n}(-1)^{k}x_{k+1}^{(j)}(t)v_{k+1,n+1}(t) 
$. Since $x_{1}^{(j)}(t)=0$ for $j\geq 2$, we have 
\begin{gather}
\hat{\alpha}^{(j)}(t)\cdot \hat{v}_{n+1}(t)=  \label{8} \\
\sum\limits_{k=1}^{n}(-1)^{k}x_{k+1}^{(j)}(t)v_{k+1,n+1}(t).  \notag
\end{gather}

By Lemma \ref{L6}, for $j\geq 2$, $x_{k+1}^{(j)}(t)v_{k+1,n+1}(t)=k\left(
\sum\limits_{i=1}^{j-1}\tfrac{a_{i,j}}{t^{j}}x_{k}^{(j-i)}(t)\right) \dfrac{%
n!}{k}\dfrac{1}{t^{n-1}}v_{k,n}(t)=\dfrac{n!}{t^{n-1}}\left(
\sum\limits_{i=1}^{j-1}\tfrac{a_{i,j}}{t^{j}}x_{k}^{(j-i)}(t)\right)
v_{k,n}(t)$.

Then $\sum\limits_{k=1}^{n}(-1)^{k}x_{k+1}^{(j)}(t)v_{k+1,n+1}(t)=\dfrac{n!}{%
t^{n-1}}\sum\limits_{k=1}^{n}(-1)^{k}\left( \sum\limits_{i=1}^{j-1}\tfrac{%
a_{i,j}}{t^{j}}x_{k}^{(j-i)}(t)\right) v_{k,n}(t)=$

$\dfrac{n!}{t^{n-1}}\sum\limits_{k=1}^{n}(-1)^{k}\left(
\sum\limits_{i=1}^{j-1}\tfrac{a_{i,j}}{t^{j}}x_{k}^{(j-i)}(t)v_{k,n}(t)%
\right) =$

$\dfrac{n!}{t^{n-1}}\sum\limits_{i=1}^{j-1}\left[ \tfrac{a_{i,j}}{t^{j}}%
\left( \sum\limits_{k=1}^{n}(-1)^{k}x_{k}^{(j-i)}(t)v_{k,n}(t)\right) \right]
=$

$\dfrac{n!}{t^{n-1}}\sum\limits_{i=1}^{j-1}\tfrac{a_{i,j}}{t^{j}}\left(
\sum\limits_{k=1}^{n}(-1)^{k}x_{k}^{(j-i)}(t)v_{k,n}(t)\right) =$

$-\dfrac{n!}{t^{n-1}}\sum\limits_{i=1}^{j-1}\tfrac{a_{i,j}}{t^{j}}\left(
\sum\limits_{k=1}^{n}(-1)^{k-1}x_{k}^{(j-i)}(t)v_{k,n}(t)\right) =$

$-\dfrac{n!}{t^{n-1}}\sum\limits_{i=1}^{j-1}\tfrac{a_{i,j}}{t^{j}}\left(
\sum\limits_{k=1}^{n}(-1)^{k-1}x_{k}^{(j-i)}(t)v_{k,n}(t)\right) $.

Since $j-i\leq n-1$ for $i\geq 1$, $\sum%
\limits_{k=1}^{n}(-1)^{k-1}x_{k}^{(j-i)}(t)v_{k,n}(t)=0$ by the inductive
hypothesis.

Thus $\hat{\alpha}^{(j)}(t)\cdot \hat{v}_{n+1}(t)=0$. To start the
induction, for $n=1$ we have $\hat{\alpha}(t)=\langle t\rangle \Rightarrow 
\hat{\alpha}^{(j)}(t)=0\Rightarrow \hat{\alpha}^{(j)}(t)\cdot \hat{v}%
_{1}(t)=0$ for $j\geq 2$.
\end{proof}

\begin{remark}
Proposition \ref{P1} could perhaps also be proven using properties of
hypergeometric functions.
\end{remark}

\section{Useful Determinants}

\begin{lemma}
\label{L5}For $n\geq 3$, $W(x_{1},...,x_{n})(t)=\dfrac{\tprod%
\limits_{r=0}^{n-1}r!}{t^{n(n-3)/2}},t>0$, where $W$ denotes the Wronskian.
\end{lemma}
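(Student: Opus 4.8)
The plan is to evaluate this Wronskian directly, avoiding any expansion of the determinant, by combining two classical Wronskian identities with the Vandermonde-type evaluation of the monomial Wronskian. Writing $u=\log t$, the key observation is that each component factors as $x_k(t)=t\,u^{k-1}$, so all the $x_k$ share the common factor $t$ over the functions $g_k(t)=(\log t)^{k-1}$.

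First I would pull out the common factor using the identity $W(gf_1,\dots,gf_n)=g^{\,n}\,W(f_1,\dots,f_n)$. This holds because $(gf_k)^{(i-1)}=\sum_{m=0}^{i-1}\binom{i-1}{m}g^{(i-1-m)}f_k^{(m)}$ exhibits the $t$-Wronskian matrix as $T\cdot[f_k^{(m)}]$ with $T$ lower triangular and every diagonal entry equal to $g$, so $\det T=g^{\,n}$. With $g=t$ and $f_k=(\log t)^{k-1}$ this gives
\[
W(x_1,\dots,x_n)(t)=t^{\,n}\,W\bigl(1,\log t,\dots,(\log t)^{n-1}\bigr)(t).
\]

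Next I would treat $W(1,\log t,\dots,(\log t)^{n-1})$ by the change-of-variables rule: if $y_k(t)=\phi_k(u(t))$ then the $t$-Wronskian equals $(u')^{n(n-1)/2}$ times the $u$-Wronskian $W(\phi_1,\dots,\phi_n)$. This follows from the same triangular-factorization idea (Faà di Bruno): the $t$-Wronskian matrix $[\,y_k^{(i-1)}\,]$ factors as $L\cdot\Phi$, where $\Phi$ is the $u$-Wronskian matrix and $L$ is lower triangular with diagonal entries $(u')^{0},(u')^{1},\dots,(u')^{n-1}$, contributing the factor $(u')^{0+1+\cdots+(n-1)}=(u')^{n(n-1)/2}$. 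Taking $u=\log t$, so $u'=1/t$, together with the classical evaluation $W_u(1,u,\dots,u^{n-1})=\prod_{r=0}^{n-1}r!$ (the monomial $u$-Wronskian matrix is upper triangular with diagonal $0!,1!,\dots,(n-1)!$), yields
\[
W\bigl(1,\log t,\dots,(\log t)^{n-1}\bigr)(t)=t^{-n(n-1)/2}\prod_{r=0}^{n-1}r!.
\]
Combining the two displays and simplifying the exponent via $n-\tfrac{n(n-1)}{2}=-\tfrac{n(n-3)}{2}$ gives exactly $W(x_1,\dots,x_n)(t)=t^{-n(n-3)/2}\prod_{r=0}^{n-1}r!$, as claimed. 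The argument is essentially routine once the two identities are in place, so the only delicate points are the exponent bookkeeping and a clean justification of the factor $(u')^{n(n-1)/2}$ in the change of variables.

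Alternatively, one could stay inside the machinery already developed. Expanding the determinant along its first row identifies $W(x_1,\dots,x_n)$ with $\hat{\alpha}\cdot\hat{n}$, where $\hat{n}=\langle W_{1,n},-W_{2,n},\dots,(-1)^{n+1}W_{n,n}\rangle$, since the minor of the entry in column $k$ of the top row is precisely the Wronskian $W_{k,n}$ of the remaining first derivatives. Proposition \ref{P1} already evaluates $\hat{\alpha}\cdot\hat{v}_n$ to the target value $\prod_{r=0}^{n-1}r!/t^{n(n-3)/2}$, so it would suffice to show $v_{k,n}=W_{k,n}$, i.e. that $\hat{v}_n$ is exactly the cofactor vector $\hat{n}$. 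The obstacle on this route is establishing that identification, which is itself a Wronskian computation of the same flavor as the one above; for that reason I would prefer the direct argument.
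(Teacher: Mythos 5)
Your proof is correct, and it follows a genuinely different route from the paper's. The paper proves Lemma \ref{L5} in two steps: it observes that the functions $x_k(t)=t(\log t)^{k-1}$ satisfy an $n$th order Euler equation with subleading coefficient $\frac{n^2-3n}{2}t^{n-1}$, so Abel's identity gives $W(x_1,\dots,x_n)(t)=C_n\,t^{-n(n-3)/2}$, and it then pins down $C_n=\prod_{r=0}^{n-1}r!$ by evaluating the Wronskian matrix at $t=1$, where Lemma \ref{L2} (built on the recursion of Lemma \ref{L1}) shows the matrix is triangular with diagonal entries $x_{r+1}^{(r)}(1)=r!$. You instead obtain the power of $t$ and the constant simultaneously from two classical identities --- $W(tf_1,\dots,tf_n)=t^{n}W(f_1,\dots,f_n)$ and the substitution rule $W_t=(u')^{n(n-1)/2}W_u$ with $u=\log t$ --- plus the triangular evaluation $W_u(1,u,\dots,u^{n-1})=\prod_{r=0}^{n-1}r!$; both identities are adequately justified by your triangular-factorization device, and the exponent bookkeeping $n-\frac{n(n-1)}{2}=-\frac{n(n-3)}{2}$ checks out. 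Notably, yours is exactly the alternative the author sketches in the remark following Lemma \ref{L5} (``finding a formula for the Wronskian of $\{(\log t)^{k-1}\}$ and using well known properties of Wronskians''); the paper avoids it only because Lemmas \ref{L1} and \ref{L2} are needed elsewhere anyway, whereas your argument is self-contained and also dispenses with the paper's unproved (``it is easy to show'') claim about the Euler equation's coefficient. Your closing paragraph is also right to reject the stay-inside-the-machinery alternative: identifying $v_{k,n}$ with $W_{k,n}$ is Proposition \ref{P2}, whose proof runs through Cramer's rule and hence through the nonvanishing of $W(x_1,\dots,x_n)$ supplied by Lemma \ref{L5} itself, so deducing the lemma that way would be circular.
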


\begin{proof}
It is easy to show that the $n\ $functions $\left\{ t{\large (}\log t{\large %
)}^{k-1}\right\} _{k=1,...,n}$ satisfy the following $n$th order Euler DE: 
\begin{equation*}
t^{n}\dfrac{d^{n}y}{dt^{n}}+\dfrac{n^{2}-3n}{2}t^{n-1}\dfrac{d^{n-1}y}{%
dt^{n-1}}+\cdots +a_{n-1}t\dfrac{dy}{dy}+a_{n}y=0.
\end{equation*}%
By Abel's Identity applied to the interval $(0,\infty )$, $%
W(x_{1},...,x_{n})(t)=$

$C_{n}\exp \left( -\dint \dfrac{\dfrac{n^{2}-3n}{2}t^{n-1}}{t^{n}}dt\right)
=C_{n}\exp \left( -\dfrac{n^{2}-3n}{2}\dint \dfrac{dt}{t}\right) =\dfrac{%
C_{n}}{t^{(n^{2}-3n)/2}}$. We shall let $t=1$ to obtain the precise value $%
C_{n}=\tprod\limits_{r=0}^{n-1}r!$. $W(x_{1},...,x_{n})(t)=\left\vert 
\begin{array}{llll}
x_{1}(t) & x_{2}(t) & \cdots & x_{n}(t) \\ 
x_{1}^{\prime }(t) & x_{2}^{\prime }(t) & \cdots & x_{n}^{\prime }(t) \\ 
x_{1}^{\prime \prime }(t) & x_{2}^{\prime \prime }(t) & \cdots & 
x_{n}^{\prime \prime }(t) \\ 
\vdots & \vdots & \cdots & \vdots \\ 
x_{1}^{(n-1)}(t) & x_{2}^{(n-1)}(t) & \cdots & x_{n}^{(n-1)}(t)%
\end{array}%
\right\vert =\left\vert 
\begin{array}{llll}
t & x_{2}(t) & \cdots & x_{n}(t) \\ 
1 & x_{2}^{\prime }(t) & \cdots & x_{n}^{\prime }(t) \\ 
0 & x_{2}^{\prime \prime }(t) & \cdots & x_{n}^{\prime \prime }(t) \\ 
\vdots & \vdots & \cdots & \vdots \\ 
0 & x_{2}^{(n-1)}(t) & \cdots & x_{n}^{(n-1)}(t)%
\end{array}%
\right\vert \Rightarrow W(x_{1},...,x_{n})(1)=\left\vert 
\begin{array}{llll}
1 & 0 & \cdots & 0 \\ 
1 & x_{2}^{\prime }(1) & \cdots & x_{n}^{\prime }(1) \\ 
0 & x_{2}^{\prime \prime }(1) & \cdots & x_{n}^{\prime \prime }(1) \\ 
\vdots & \vdots & \cdots & \vdots \\ 
0 & x_{2}^{(n-1)}(1) & \cdots & x_{n}^{(n-1)}(1)%
\end{array}%
\right\vert =$

$\left\vert 
\begin{array}{lll}
x_{2}^{\prime }(1) & \cdots & x_{n}^{\prime }(1) \\ 
x_{2}^{\prime \prime }(1) & \cdots & x_{n}^{\prime \prime }(1) \\ 
\vdots & \cdots & \vdots \\ 
x_{2}^{(n-1)}(1) & \cdots & x_{n}^{(n-1)}(1)%
\end{array}%
\right\vert $. The diagonal entries are $x_{r+1}^{(r)}(1),r=1,...,n-1$ and
for row $i$ we have $\left[ 
\begin{array}{lll}
x_{2}^{(i)}(1) & \cdots & x_{n}^{(i)}(1)%
\end{array}%
\right] $.

By Lemma \ref{L2}, the entries in row $i$, column $j,j\geq i+2$, are each $0$%
. That shows that the matrix $\left[ 
\begin{array}{lll}
x_{2}^{\prime }(1) & \cdots & x_{n}^{\prime }(1) \\ 
x_{2}^{\prime \prime }(1) & \cdots & x_{n}^{\prime \prime }(1) \\ 
\vdots & \cdots & \vdots \\ 
x_{2}^{(n-1)}(1) & \cdots & x_{n}^{(n-1)}(1)%
\end{array}%
\right] $ is upper triangular, which implies that $\left\vert 
\begin{array}{lll}
x_{2}^{\prime }(1) & \cdots & x_{n}^{\prime }(1) \\ 
x_{2}^{\prime \prime }(1) & \cdots & x_{n}^{\prime \prime }(1) \\ 
\vdots & \cdots & \vdots \\ 
x_{2}^{(n-1)}(1) & \cdots & x_{n}^{(n-1)}(1)%
\end{array}%
\right\vert
=\tprod\limits_{r=1}^{n-1}x_{r+1}^{(r)}(1)=\tprod\limits_{r=0}^{n-1}r!$ by
Lemma \ref{L2}.
\end{proof}

\begin{remark}
One could also prove Lemma \ref{L5}\textbf{\ }by instead finding a formula
for the Wronskian of $\left\{ {\large (}\log t{\large )}^{k-1}\right\}
_{k=1}^{n}$ and using well known properties of Wronskians. That would be
easier if one did not already have the recursion for $x_{k+1}^{(r)}(t)$.
Since we use that recursion elsewhere, it was easier to then prove Lemmas %
\ref{L2} first.
\end{remark}

Our next result shows that the Wronskians $W_{k,n}$ are in fact identically
equal to the functions $v_{k,n}$ from Lemma \ref{L6}.

\begin{proposition}
\textbf{\label{P2}}For $1\leq k\leq n$ and $n\geq $ $3$, 
\begin{equation*}
W_{k,n}(t)=\dfrac{\dprod\limits_{r=0}^{n-1}r!}{(k-1)!}\dfrac{1}{%
t^{(n-2)(n-1)/2}}\dsum\limits_{j=0}^{n-k}\dfrac{\left( \ln t\right) ^{n-k-j}%
}{(n-k-j)!},t>0.
\end{equation*}
\end{proposition}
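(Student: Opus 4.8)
The plan is to avoid computing the $(n-1)\times(n-1)$ Wronskian determinants $W_{k,n}$ directly, and instead to identify the vector $\hat n(t)=\langle W_{1,n}(t),-W_{2,n}(t),\dots,(-1)^{n+1}W_{n,n}(t)\rangle$ with the already-understood vector $\hat v_n(t)$ from Proposition \ref{P1} via a uniqueness argument. The key observation is that both vectors are pinned down by the same $n$ inner-product conditions against the frame $\hat\alpha,\hat\alpha',\dots,\hat\alpha^{(n-1)}$, and a vector in $R^n$ is determined by its pairings with a basis.

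First I would record the standard cofactor identities for $\hat n(t)$. Form the $(n-1)\times n$ matrix $A(t)$ whose rows are $\hat\alpha'(t),\hat\alpha''(t),\dots,\hat\alpha^{(n-1)}(t)$. By the definition in (\ref{2}), $W_{k,n}(t)$ is exactly the minor of $A(t)$ obtained by deleting column $k$, so $(-1)^{k+1}W_{k,n}(t)$ is the corresponding signed maximal minor and $\hat n(t)$ is the vector of these signed minors. For any row vector $\hat w$, appending $\hat w$ as a new top row to $A(t)$ and expanding the resulting $n\times n$ determinant along that row yields $\hat w\cdot\hat n(t)$. Taking $\hat w=\hat\alpha^{(i)}(t)$ for $i=1,\dots,n-1$ produces a determinant with two equal rows, whence $\hat\alpha^{(i)}(t)\cdot\hat n(t)=0$ for $i=1,\dots,n-1$; taking $\hat w=\hat\alpha(t)$ reconstitutes the full Wronskian, so $\hat\alpha(t)\cdot\hat n(t)=W(x_1,\dots,x_n)(t)$, which by Lemma \ref{L5} equals $\prod_{r=0}^{n-1}r!\,/\,t^{n(n-3)/2}$.

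Next I would invoke Proposition \ref{P1}, which asserts that $\hat v_n(t)$ satisfies these very same conditions: $\hat\alpha^{(j)}(t)\cdot\hat v_n(t)=0$ for $j=1,\dots,n-1$, and $\hat\alpha(t)\cdot\hat v_n(t)=\prod_{r=0}^{n-1}r!\,/\,t^{n(n-3)/2}$. Since $W(x_1,\dots,x_n)(t)\neq 0$ for $t>0$, the $n$ vectors $\hat\alpha(t),\hat\alpha'(t),\dots,\hat\alpha^{(n-1)}(t)$ are linearly independent and hence form a basis of $R^n$. The difference $\hat n(t)-\hat v_n(t)$ is then orthogonal to every basis vector and must vanish, so $\hat n(t)=\hat v_n(t)$ identically. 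Comparing $k$th components and noting $(-1)^{k+1}=(-1)^{k-1}$ gives $W_{k,n}(t)=v_{k,n}(t)$ for every $k$, which is exactly the claimed closed form.

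Most of the genuine content has already been discharged in Proposition \ref{P1} and Lemma \ref{L5}, so the remaining work is light. The only place requiring care is the sign bookkeeping in the cofactor expansion—verifying that the signed minors $(-1)^{k+1}W_{k,n}$ are precisely the components of $\hat n(t)$ and that these agree with the $(-1)^{k-1}$ signs attached to $v_{k,n}$ in $\hat v_n(t)$—together with the observation that $\{\hat\alpha^{(i)}\}_{i=0}^{n-1}$ is a basis, which is immediate from the nonvanishing of the Wronskian. I do not expect a real obstacle here: the whole argument reduces to the principle that a vector is uniquely determined by its inner products with a basis.
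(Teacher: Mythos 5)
Your proposal is correct and is essentially the paper's own argument: both proofs pin down $\hat{n}(t)=\langle W_{1,n}(t),-W_{2,n}(t),\dots,(-1)^{n+1}W_{n,n}(t)\rangle$ and $\hat{v}_{n}(t)$ as the unique solution of the same linear system with nonsingular coefficient matrix --- the inner products against $\hat{\alpha}(t),\hat{\alpha}'(t),\dots,\hat{\alpha}^{(n-1)}(t)$ being prescribed by Lemma \ref{L5} and Proposition \ref{P1} --- and then conclude $W_{k,n}=v_{k,n}$ by uniqueness. The only cosmetic difference is that the paper verifies that $(-1)^{k+1}W_{k,n}(t)$ solves this system via Cramer's rule applied to the Wronskian matrix, whereas you verify it directly through the Laplace expansion of the bordered matrix and the vanishing of determinants with repeated rows.
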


\begin{proof}
Consider the following system of linear equations in the unknown functions $%
u_{1}(t),...,u_{n}(t)$, where $k_{n}(t)=\tfrac{\tprod\limits_{r=0}^{n-1}r!}{%
t^{n(n-3)/2}}$:%
\begin{eqnarray*}
x_{1}(t)u_{1}(t)+\cdots x_{n}(t)u_{n}(t) &=&k_{n}(t) \\
x_{1}^{\prime }(t)u_{1}(t)+\cdots x_{n}^{\prime }(t)u_{n}(t) &=&0 \\
&&\vdots \\
x_{1}^{(n-1)}(t)u_{1}(t)+\cdots x_{n}^{(n-1}(t)u_{n}(t) &=&0.
\end{eqnarray*}%
The coefficent matrix of this linear system has determinant $%
W(x_{1},...,x_{n})(t)$, which is nonzero by Lemma \ref{L5}. By Cramer's
Rule, the unique solution is given by 
\begin{eqnarray*}
x_{j}(t) &=& \\
&&\tfrac{\left\vert 
\begin{array}{lllllll}
x_{1}(t) & \cdots & x_{j-1}(t) & k_{n}(t) & x_{j+1}(t) & \cdots & x_{n}(t)
\\ 
x_{1}^{\prime }(t) & \cdots & x_{j-1}^{\prime }(t) & 0 & x_{j+1}^{\prime }(t)
& \cdots & x_{n}^{\prime }(t) \\ 
x_{1}^{\prime \prime }(t) & \cdots & x_{j-1}^{\prime \prime }(t) & 0 & 
x_{j+1}^{\prime \prime }(t) & \cdots & x_{n}^{\prime \prime }(t) \\ 
\vdots & \cdots & \vdots & \vdots & \vdots & \cdots & \vdots \\ 
x_{1}^{(n-1)}(t) & \cdots & x_{j-1}^{(n-1)}(t) & 0 & x_{j+1}^{(n-1)}(t) & 
\cdots & x_{n}^{(n-1)}(t)%
\end{array}%
\right\vert }{W(x_{1},...,x_{n})(t)} \\
j &=&1,...,n.
\end{eqnarray*}
Expand about column $j$ to obtain $x_{j}(t)=k_{n}(t)\tfrac{%
(-1)^{j+1}W_{j,n}(t)}{W(x_{1},...,x_{n})(t)}=(-1)^{j+1}W_{j,n}(t)$. By
Proposition 1, $x_{j}(t)=(-1)^{j+1}v_{j,n}(t)$ also satisfies (7). By
uniqueness, $W_{j,n}(t)=v_{j,n}(t),j=1,...,n$.
\end{proof}

\begin{remark}
It follows immediately from Proposition \ref{P2}, that the $W_{k,n}$ also
satisfy the following recursion from Lemma \ref{L6} for $n\geq 2$: 
\begin{eqnarray}
W_{k+1,n+1}(t) &=&\dfrac{n!}{k}\dfrac{1}{t^{n-1}}W_{k,n}(t),k\geq 1
\label{5} \\
W_{1,n+1}(t) &=&\left( \dprod\limits_{r=0}^{n-1}r!\right) \dfrac{\left( \ln
t\right) ^{n}}{t^{n(n-1)/2}}+\dfrac{n!}{t^{n-1}}W_{1,n}(t).  \label{6}
\end{eqnarray}%
(\ref{5}) can also be proven using the determinant definition of the $%
W_{k,n} $ along with standard properties of determinants. However, we found
it difficult to prove (\ref{6}) this way--hence the introduction of the $%
v_{k,n} $ functions.
\end{remark}

\begin{lemma}
\label{L7}For\textbf{\ }$n\geq 3$,%
\begin{equation*}
\dsum\limits_{k=1}^{n}\left[ (-1)^{k+1}b_{k}^{n-1}\tprod\limits_{1\leq
i<j\leq n\text{; }i,j\neq k}(b_{j}-b_{i})\right] =(-1)^{n-1}\tprod\limits_{1%
\leq i<j\leq n}(b_{j}-b_{i}).
\end{equation*}
\end{lemma}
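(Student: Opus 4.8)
The plan is to recognize both sides of the identity as manifestations of the Vandermonde determinant and to obtain the lemma as a single cofactor expansion. Write $V(b_{1},\dots ,b_{n})=\prod_{1\leq i<j\leq n}(b_{j}-b_{i})$ for the Vandermonde product; it is classical that $V(b_{1},\dots ,b_{n})=\det [\,b_{j}^{\,i-1}\,]_{i,j=1}^{n}$, the determinant of the $n\times n$ matrix whose $i$th row is $(b_{1}^{\,i-1},\dots ,b_{n}^{\,i-1})$. The right-hand side of the lemma is then exactly $(-1)^{n-1}V(b_{1},\dots ,b_{n})$. For the left-hand side, the inner product $\prod_{1\leq i<j\leq n;\,i,j\neq k}(b_{j}-b_{i})$ is precisely $V(b_{1},\dots ,\widehat{b_{k}},\dots ,b_{n})$, the Vandermonde product in the $n-1$ variables obtained by deleting $b_{k}$.

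The key step is to expand the determinant $\det [\,b_{j}^{\,i-1}\,]$ along its last row, whose entries are $(b_{1}^{\,n-1},\dots ,b_{n}^{\,n-1})$. The cofactor of the entry $b_{k}^{\,n-1}$ sitting in position $(n,k)$ is $(-1)^{n+k}$ times the minor obtained by striking out the last row and the $k$th column. That minor is the $(n-1)\times (n-1)$ matrix $[\,b_{\ell }^{\,i-1}\,]$ with $i$ ranging over $1,\dots ,n-1$ and $\ell $ over $\{1,\dots ,n\}\setminus \{k\}$, i.e. the Vandermonde matrix of the variables other than $b_{k}$, so its determinant equals $V(b_{1},\dots ,\widehat{b_{k}},\dots ,b_{n})$. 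Laplace expansion therefore gives
\[
V(b_{1},\dots ,b_{n})=\sum_{k=1}^{n}(-1)^{n+k}\,b_{k}^{\,n-1}\,V(b_{1},\dots ,\widehat{b_{k}},\dots ,b_{n}).
\]

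It then remains only to reconcile the signs. Since $(-1)^{n+k}=(-1)^{n-1}(-1)^{k+1}$, factoring $(-1)^{n-1}$ out of the sum rewrites the displayed identity as $V(b_{1},\dots ,b_{n})=(-1)^{n-1}\sum_{k=1}^{n}(-1)^{k+1}b_{k}^{\,n-1}V(b_{1},\dots ,\widehat{b_{k}},\dots ,b_{n})$, and multiplying through by $(-1)^{n-1}$ yields exactly the statement of the lemma. I do not expect any genuine obstacle: the only point requiring care is the sign bookkeeping, namely confirming that the column-$k$ minor carries the cofactor sign $(-1)^{n+k}$ and equals the reduced Vandermonde with no further permutation sign, which holds because deleting column $k$ preserves the left-to-right order of the surviving variables. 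As an alternative route, one could divide the claimed identity by $V(b_{1},\dots ,b_{n})$: using $V(b_{1},\dots ,\widehat{b_{k}},\dots ,b_{n})=(-1)^{n-k}V(b_{1},\dots ,b_{n})/\prod_{i\neq k}(b_{k}-b_{i})$ reduces the lemma to the classical partial-fraction identity $\sum_{k=1}^{n}b_{k}^{\,n-1}/\prod_{i\neq k}(b_{k}-b_{i})=1$, which is the statement that the leading coefficient of the Lagrange interpolant of $x^{n-1}$ through the nodes $b_{1},\dots ,b_{n}$ equals $1$.
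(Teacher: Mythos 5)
Your proof is correct, and it rests on the same two pillars as the paper's own argument: the classical Vandermonde determinant formula and a Laplace (cofactor) expansion that pairs each $b_k^{n-1}$ with the reduced Vandermonde product $\prod_{1\leq i<j\leq n;\,i,j\neq k}(b_j-b_i)$. The difference is one of execution, and yours is the tidier version: the paper first reverses the rows of the Vandermonde matrix (picking up the sign $(-1)^{n(n-1)/2}$), expands the reversed determinant along its first row, and then needs an induction to identify each reversed minor as $(-1)^{(n-1)(n-2)/2}$ times a reduced Vandermonde product, the final sign $(-1)^{n-1}$ emerging as the ratio of these two quadratic sign factors; you instead expand the standard (unreversed) Vandermonde determinant along its last row, so that each minor is itself a standard Vandermonde determinant of the surviving variables, and no row reversal, no induction, and no quadratic exponents are needed, only the trivial identity $(-1)^{n+k}=(-1)^{n-1}(-1)^{k+1}$. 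Your closing alternative, dividing by $V(b_1,\dots,b_n)$ and reducing the lemma to the Lagrange-interpolation identity $\sum_{k=1}^{n}b_k^{n-1}/\prod_{i\neq k}(b_k-b_i)=1$, is likewise valid (the sign relation $V(b_1,\dots,\widehat{b_k},\dots,b_n)=(-1)^{n-k}V(b_1,\dots,b_n)/\prod_{i\neq k}(b_k-b_i)$ checks out) and is a genuinely different second route that the paper does not mention.
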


\begin{proof}
It is well known that the Vandermonde determinant

$\left\vert 
\begin{array}{llll}
1 & 1 & \cdots & 1 \\ 
b_{1} & b_{2} & \cdots & b_{n} \\ 
\vdots & \vdots & \vdots & \vdots \\ 
b_{1}^{n-2} & b_{2}^{n-2} & \cdots & b_{n}^{n-2} \\ 
b_{1}^{n-1} & b_{2}^{n-1} & \cdots & b_{n}^{n-1}%
\end{array}%
\right\vert =\tprod\limits_{1\leq i<j\leq n}(b_{j}-b_{i})$, which implies
that

$\left\vert 
\begin{array}{llll}
b_{1}^{n-1} & b_{2}^{n-1} & \cdots & b_{n}^{n-1} \\ 
b_{1}^{n-2} & b_{2}^{n-2} & \cdots & b_{n}^{n-2} \\ 
\vdots & \vdots & \vdots & \vdots \\ 
b_{1} & b_{2} & \cdots & b_{n} \\ 
1 & 1 & \cdots & 1%
\end{array}%
\right\vert =(-1)^{n(n-1)/2}\tprod\limits_{1\leq i<j\leq n}(b_{j}-b_{i})$.
By expanding $\left\vert 
\begin{array}{llll}
b_{1}^{n-1} & b_{2}^{n-1} & \cdots & b_{n}^{n-1} \\ 
b_{1}^{n-2} & b_{2}^{n-2} & \cdots & b_{n}^{n-2} \\ 
\vdots & \vdots & \vdots & \vdots \\ 
b_{1} & b_{2} & \cdots & b_{n} \\ 
1 & 1 & \cdots & 1%
\end{array}%
\right\vert $ along the first row and using induction, one has $%
(-1)^{(n-1)(n-2)/2}\dsum\limits_{k=1}^{n}\left[ (-1)^{k+1}b_{k}^{n-1}\tprod%
\limits_{1\leq i<j\leq n\text{; }i,j\neq k}(b_{j}-b_{i})\right] =$

$(-1)^{n(n-1)/2}\tprod\limits_{1\leq i<j\leq n}(b_{j}-b_{i})$ and the lemma
follows immediately.
\end{proof}

\begin{proposition}
\textbf{\label{P3}}For $n\geq 3$, $%
\begin{vmatrix}
W_{1,n}(a_{1}) & -W_{2,n}(a_{1}) & \cdots & (-1)^{n+1}W_{n,n}(a_{1}) \\ 
W_{1,n}(a_{2}) & -W_{2,n}(a_{2}) & \cdots & (-1)^{n+1}W_{n,n}(a_{2}) \\ 
\vdots & \vdots & \vdots & \vdots \\ 
W_{1,n}(a_{n}) & -W_{2,n}(a_{n}) & \cdots & (-1)^{n+1}W_{n,n}(a_{n})%
\end{vmatrix}%
=\left( \dprod\limits_{r=0}^{n-1}r!\right) ^{n-2}\dfrac{\tprod\limits_{1\leq
i<j\leq n}(\ln a_{j}-\ln a_{i})}{\prod\limits_{j=1}^{n}a_{j}^{(n-1)(n-2)/2}}$%
.
\end{proposition}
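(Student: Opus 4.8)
The plan is to substitute the closed form for $W_{k,n}$ from Proposition \ref{P2} and then peel the determinant down to a Vandermonde determinant. Write $b_m=\ln a_m$, so that $b_1<\cdots<b_n$, and note that the $(m,k)$ entry of the matrix is $(-1)^{k+1}W_{k,n}(a_m)$, which by Proposition \ref{P2} equals
\[
(-1)^{k+1}\frac{\prod_{r=0}^{n-1}r!}{(k-1)!}\,\frac{1}{a_m^{(n-1)(n-2)/2}}\sum_{j=0}^{n-k}\frac{b_m^{\,n-k-j}}{(n-k-j)!}.
\]
The first step is to pull out the common factors. Every entry in row $m$ carries the factor $a_m^{-(n-1)(n-2)/2}$, and every entry in column $k$ carries the factor $(-1)^{k+1}\prod_{r=0}^{n-1}r!/(k-1)!$. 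Factoring these out produces the scalar $\prod_{m=1}^{n} a_m^{-(n-1)(n-2)/2}$ times $(-1)^{n(n+3)/2}\bigl(\prod_{r=0}^{n-1}r!\bigr)^{n-1}$, where I would use $\prod_{k=1}^{n}(k-1)!=\prod_{r=0}^{n-1}r!$. This leaves a reduced determinant $D$ whose $(m,k)$ entry is the truncated exponential $P_{n-k}(b_m):=\sum_{s=0}^{n-k}b_m^{s}/s!$, obtained by reindexing with $s=n-k-j$.

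The heart of the argument is the evaluation of $D$. The key observation is that consecutive columns differ by a single monomial: since $P_d(x)=x^d/d!+P_{d-1}(x)$, column $k$ (namely $P_{n-k}$) minus column $k+1$ (namely $P_{n-k-1}$) equals the vector $\bigl(b_m^{\,n-k}/(n-k)!\bigr)_m$. Performing the operations ``subtract column $k+1$ from column $k$'' for $k=1,2,\ldots,n-1$ from left to right — none of which changes the determinant — turns the reduced matrix into the one with $(m,k)$ entry $b_m^{\,n-k}/(n-k)!$, the last column $P_0\equiv 1$ already being in this form. Factoring $1/(n-k)!$ out of column $k$ contributes $\prod_{k=1}^{n}1/(n-k)!=1/\prod_{r=0}^{n-1}r!$ and leaves the matrix with $(m,k)$ entry $b_m^{\,n-k}$.

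This final matrix is a column-reversed Vandermonde matrix, and it is the transpose of the one evaluated in Lemma \ref{L7}, so its value is $(-1)^{n(n-1)/2}\prod_{1\le i<j\le n}(b_j-b_i)$. Hence
\[
D=\frac{(-1)^{n(n-1)/2}}{\prod_{r=0}^{n-1}r!}\prod_{1\le i<j\le n}(b_j-b_i),
\]
and restoring the row and column factors from the first step gives the claim, with the power of $\prod_{r=0}^{n-1}r!$ dropping from $n-1$ to $n-2$. The only genuine obstacle is the bookkeeping of the constants and signs: one must verify that $\prod_{k=1}^{n}(k-1)!$ and $\prod_{k=1}^{n}(n-k)!$ both equal $\prod_{r=0}^{n-1}r!$, so that the factorial powers combine to exactly $n-2$, and that the accumulated sign $(-1)^{n(n+3)/2}\cdot(-1)^{n(n-1)/2}=(-1)^{n(n+1)}$ equals $1$ because $n(n+1)$ is even. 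Since $b_j=\ln a_j$ recovers the stated numerator, everything else is routine.
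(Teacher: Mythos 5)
Your proof is correct, but it takes a genuinely different route from the paper's. You substitute the closed form of Proposition \ref{P2} directly into the matrix, strip off the row factors $a_m^{-(n-1)(n-2)/2}$ and the column factors $(-1)^{k+1}\prod_{r=0}^{n-1}r!/(k-1)!$, and then your key step --- the telescoping column operations exploiting $P_d(x)=x^d/d!+P_{d-1}(x)$, performed left to right so that each subtracted column is still in its original form --- collapses the truncated exponentials to the matrix with entries $(\ln a_m)^{n-k}$, a (transposed) row-reversed Vandermonde; the constants and signs then combine exactly as you say, since $n(n+1)$ is even and both $\prod_{k=1}^{n}(k-1)!$ and $\prod_{k=1}^{n}(n-k)!$ equal $\prod_{r=0}^{n-1}r!$. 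The paper instead argues by induction on $n$: it uses the recursions (\ref{5}) and (\ref{6}) to rewrite the columns in terms of $W_{k,n-1}$, adds $(n-1)!$ times column $2$ to column $1$, factors out constants, expands along the first column, applies the induction hypothesis to each $(n-1)\times(n-1)$ minor, and finally invokes the summation identity of Lemma \ref{L7} to reassemble the resulting sum into $\prod_{1\le i<j\le n}(\ln a_j-\ln a_i)$, with a separately computed $n=3$ base case. Both arguments ultimately rest on Proposition \ref{P2} (the paper through the recursions recorded in its Remark), but yours is non-inductive and shorter, makes the Vandermonde structure transparent, and needs only the standard evaluation of a reversed Vandermonde determinant rather than the full identity of Lemma \ref{L7}; the paper's induction, in exchange, reuses the same template in the proof of Proposition \ref{P4}, so the two propositions come almost for the price of one. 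One small referencing point: the value $(-1)^{n(n-1)/2}\prod_{1\le i<j\le n}(b_j-b_i)$ that you cite is established inside the proof of Lemma \ref{L7}, not in its statement, so it would be cleaner to quote it as the standard row-reversal of the Vandermonde determinant.
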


\begin{proof}
We use induction. So assume that

$%
\begin{vmatrix}
{\small W}_{1,n-1}{\small (a}_{1}{\small )} & {\small -W}_{2,n-1}{\small (a}%
_{1}{\small )} & {\small \cdots } & {\small (-1)}^{n}{\small W}_{n-1,n-1}%
{\small (a}_{1}{\small )} \\ 
{\small W}_{1,n-1}{\small (a}_{2}{\small )} & {\small -W}_{2,n-1}{\small (a}%
_{2}{\small )} & {\small \cdots } & {\small (-1)}^{n}{\small W}_{n-1,n-1}%
{\small (a}_{2}{\small )} \\ 
{\small \vdots } & {\small \vdots } & {\small \vdots } & {\small \vdots } \\ 
{\small W}_{1,n-1}{\small (a}_{n-1}{\small )} & {\small -W}_{2,n-1}{\small (a%
}_{n-1}{\small )} & {\small \cdots } & {\small (-1)}^{n}{\small W}_{n-1,n-1}%
{\small (a}_{n-1}{\small )}%
\end{vmatrix}%
=$

$\left( \tprod\limits_{r=0}^{n-2}r!\right) ^{n-3}\tfrac{\tprod\limits_{1\leq
i<j\leq n-1}(\ln a_{j}-\ln a_{i})}{\prod%
\limits_{j=1}^{n-1}a_{j}^{(n-2)(n-3)/2}}$ for any $0<a_{1}<a_{2}<\cdots
<a_{n-1}$. Using (\ref{5}) we have $%
\begin{vmatrix}
{\small W}_{1,n}{\small (a}_{1}{\small )} & {\small -W}_{2,n}{\small (a}_{1}%
{\small )} & {\small \cdots } & {\small (-1)}^{n+1}{\small W}_{n,n}{\small (a%
}_{1}{\small )} \\ 
{\small W}_{1,n}{\small (a}_{2}{\small )} & {\small -W}_{2,n}{\small (a}_{2}%
{\small )} & {\small \cdots } & {\small (-1)}^{n+1}{\small W}_{n,n}{\small (a%
}_{2}{\small )} \\ 
{\small \vdots } & {\small \vdots } & {\small \vdots } & {\small \vdots } \\ 
{\small W}_{1,n}{\small (a}_{n}{\small )} & {\small -W}_{2,n}{\small (a}_{n}%
{\small )} & {\small \cdots } & {\small (-1)}^{n+1}{\small W}_{n,n}{\small (a%
}_{n}{\small )}%
\end{vmatrix}%
=$%
\begin{eqnarray*}
&&{\large (}(n-1)!{\large )}^{n-1}\times \\
&&%
\begin{vmatrix}
{\small W}_{1,n}{\small (a}_{1}{\small )} & {\small -}\tfrac{{\small W}%
_{1,n-1}{\small (a}_{1}{\small )}}{a_{1}^{n-2}} & {\small \cdots } & \tfrac{%
{\small (-1)}^{k}{\small W}_{k,n-1}{\small (a}_{1}{\small )}}{ka_{1}^{n-2}}
& {\small \cdots } & \tfrac{{\small (-1)}^{n+1}{\small W}_{n-1,n-1}{\small (a%
}_{1}{\small )}}{(n-1)a_{1}^{n-2}} \\ 
{\small W}_{1,n}{\small (a}_{2}{\small )} & {\small -}\tfrac{{\small W}%
_{1,n-1}{\small (a}_{2}{\small )}}{a_{2}^{n-2}} & {\small \cdots } & \tfrac{%
{\small (-1)}^{k}{\small W}_{k,n-1}{\small (a}_{2}{\small )}}{ka_{2}^{n-2}}
& {\small \cdots } & \tfrac{{\small (-1)}^{n+1}{\small W}_{n-1,n-1}{\small (a%
}_{2}{\small )}}{(n-1)a_{2}^{n-2}} \\ 
{\small \vdots } & {\small \vdots } & {\small \vdots } & {\small \vdots } & 
{\small \vdots } & {\small \vdots } \\ 
{\small W}_{1,n}{\small (a}_{n}{\small )} & {\small -}\tfrac{{\small W}%
_{1,n-1}{\small (a}_{n}{\small )}}{a_{n}^{n-2}} & {\small \cdots } & \tfrac{%
{\small (-1)}^{k}{\small W}_{k,n-1}{\small (a}_{n}{\small )}}{ka_{n}^{n-2}}
& {\small \cdots } & \tfrac{{\small (-1)}^{n+1}{\small W}_{n-1,n-1}{\small (a%
}_{n}{\small )}}{(n-1)a_{n}^{n-2}}%
\end{vmatrix}%
.
\end{eqnarray*}

Using (\ref{6}) yields 
\begin{eqnarray*}
&&{\large (}(n-1)!{\large )}^{n-1}\times \\
&&%
\begin{vmatrix}
\left( \tprod\limits_{r=0}^{n-2}r!\right) \tfrac{\left( \ln a_{1}\right)
^{n-1}}{a_{1}^{(n-1)(n-2)/2}}{\small +}\tfrac{(n-1)!{\small W}_{1,n-1}%
{\small (a}_{1}{\small )}}{a_{1}^{n-2}} & {\small -}\tfrac{{\small W}_{1,n-1}%
{\small (a}_{1}{\small )}}{a_{1}^{n-2}} & {\small \cdots } & \tfrac{{\small %
(-1)}^{n+1}{\small W}_{n-1,n-1}{\small (a}_{1}{\small )}}{(n-1)a_{1}^{n-2}}
\\ 
\left( \tprod\limits_{r=0}^{n-2}r!\right) \tfrac{\left( \ln a_{2}\right)
^{n-1}}{a_{2}^{(n-1)(n-2)/2}}{\small +}\tfrac{(n-1)!{\small W}_{1,n-1}%
{\small (a}_{2}{\small )}}{a_{2}^{n-2}} & {\small -}\tfrac{{\small W}_{1,n-1}%
{\small (a}_{2}{\small )}}{a_{2}^{n-2}} & {\small \cdots } & \tfrac{{\small %
(-1)}^{n+1}{\small W}_{n-1,n-1}{\small (a}_{2}{\small )}}{(n-1)a_{2}^{n-2}}
\\ 
{\small \vdots } & {\small \vdots } & {\small \vdots } & {\small \vdots } \\ 
\left( \tprod\limits_{r=0}^{n-2}r!\right) \tfrac{\left( \ln a_{n}\right)
^{n-1}}{a_{n}^{(n-1)(n-2)/2}}{\small +}\tfrac{(n-1)!{\small W}_{1,n-1}%
{\small (a}_{n}{\small )}}{a_{n}^{n-2}} & {\small -}\tfrac{{\small W}_{1,n-1}%
{\small (a}_{n}{\small )}}{a_{n}^{n-2}} & {\small \cdots } & \tfrac{{\small %
(-1)}^{n+1}{\small W}_{n-1,n-1}{\small (a}_{n}{\small )}}{(n-1)a_{n}^{n-2}}%
\end{vmatrix}%
.
\end{eqnarray*}

By adding $(n-1)!$ $\times $ Col. 2 to Col.1 we have 
\begin{eqnarray*}
&&{\large (}(n-1)!{\large )}^{n-1}\times \\
&&%
\begin{vmatrix}
\left( \tprod\limits_{r=0}^{n-2}r!\right) \tfrac{\left( \ln a_{1}\right)
^{n-1}}{a_{1}^{(n-1)(n-2)/2}} & {\small -}\tfrac{{\small W}_{1,n-1}{\small (a%
}_{1}{\small )}}{a_{1}^{n-2}} & {\small \cdots } & \tfrac{{\small (-1)}^{k}%
{\small W}_{k,n-1}{\small (a}_{1}{\small )}}{ka_{1}^{n-2}} & {\small \cdots }
& \tfrac{{\small (-1)}^{n+1}{\small W}_{n-1,n-1}{\small (a}_{1}{\small )}}{%
(n-1)a_{1}^{n-2}} \\ 
\left( \tprod\limits_{r=0}^{n-2}r!\right) \tfrac{\left( \ln a_{2}\right)
^{n-1}}{a_{2}^{(n-1)(n-2)/2}} & {\small -}\tfrac{{\small W}_{1,n-1}{\small (a%
}_{2}{\small )}}{a_{2}^{n-2}} & {\small \cdots } & \tfrac{{\small (-1)}^{k}%
{\small W}_{k,n-1}{\small (a}_{2}{\small )}}{ka_{2}^{n-2}} & {\small \cdots }
& \tfrac{{\small (-1)}^{n+1}{\small W}_{n-1,n-1}{\small (a}_{2}{\small )}}{%
(n-1)a_{2}^{n-2}} \\ 
\vdots & {\small \vdots } & {\small \vdots } & {\small \vdots } & {\small %
\vdots } & {\small \vdots } \\ 
\left( \tprod\limits_{r=0}^{n-2}r!\right) \tfrac{\left( \ln a_{n}\right)
^{n-1}}{a_{n}^{(n-1)(n-2)/2}} & {\small -}\tfrac{{\small W}_{1,n-1}{\small (a%
}_{n}{\small )}}{a_{n}^{n-2}} & {\small \cdots } & \tfrac{{\small (-1)}^{k}%
{\small W}_{k,n-1}{\small (a}_{n}{\small )}}{ka_{n}^{n-2}} & {\small \cdots }
& \tfrac{{\small (-1)}^{n+1}{\small W}_{n-1,n-1}{\small (a}_{n}{\small )}}{%
(n-1)a_{n}^{n-2}}%
\end{vmatrix}%
.
\end{eqnarray*}

Factoring out $\tprod\limits_{r=0}^{n-2}r!$ from Col. 1, factoring out $%
\tfrac{1}{k}$ from Column $k+1,k=1,..,n-1$, and factoring out $\tfrac{1}{%
a_{j}^{n-2}}$ from row $j,j=1,...,n$, yields 
\begin{eqnarray*}
&&\tfrac{{\large (}(n-1)!{\large )}^{n-2}\left(
\tprod\limits_{r=0}^{n-2}r!\right) }{\prod\limits_{j=1}^{n}a_{j}^{n-2}}\times
\\
&&%
\begin{vmatrix}
\tfrac{\left( \ln a_{1}\right) ^{n-1}}{a_{1}^{(n-2)(n-3)/2}} & {\small -W}%
_{1,n-1}{\small (a}_{1}{\small )} & {\small \cdots } & {\small (-1)}^{k}%
{\small W}_{k,n-1}{\small (a}_{1}{\small )} & {\small \cdots } & {\small (-1)%
}^{n+1}{\small W}_{n-1,n-1}{\small (a}_{1}{\small )} \\ 
\tfrac{\left( \ln a_{2}\right) ^{n-1}}{a_{2}^{(n-2)(n-3)/2}} & {\small -W}%
_{1,n-1}{\small (a}_{2}{\small )} & {\small \cdots } & {\small (-1)}^{k}%
{\small W}_{k,n-1}{\small (a}_{2}{\small )} & {\small \cdots } & {\small (-1)%
}^{n+1}{\small W}_{n-1,n-1}{\small (a}_{2}{\small )} \\ 
{\small \vdots } & {\small \vdots } & {\small \vdots } & {\small \vdots } & 
{\small \vdots } & {\small \vdots } \\ 
\tfrac{\left( \ln a_{n}\right) ^{n-1}}{a_{n}^{(n-2)(n-3)/2}} & {\small -W}%
_{1,n-1}{\small (a}_{n}{\small )} & {\small \cdots } & {\small (-1)}^{k}%
{\small W}_{k,n-1}{\small (a}_{n}{\small )} & {\small \cdots } & {\small (-1)%
}^{n+1}{\small W}_{n-1,n-1}{\small (a}_{n}{\small )}%
\end{vmatrix}%
.
\end{eqnarray*}%
By expanding about Col. 1 we obtain 
\begin{eqnarray*}
&&\tfrac{{\large (}(n-1)!{\large )}^{n-2}\left(
\tprod\limits_{r=0}^{n-2}r!\right) }{\prod\limits_{j=1}^{n}a_{j}^{n-2}}\times
\\
&&\left( 
\begin{array}{c}
\tfrac{\left( \ln a_{1}\right) ^{n-1}}{a_{1}^{(n-2)(n-3)/2}}%
\begin{vmatrix}
{\small -W}_{1,n-1}{\small (a}_{2}{\small )} & {\small \cdots } & {\small %
(-1)}^{n+1}{\small W}_{n-1,n-1}{\small (a}_{2}{\small )} \\ 
{\small \vdots } & {\small \vdots } & {\small \vdots } \\ 
{\small -W}_{1,n-1}{\small (a}_{n}{\small )} & {\small \cdots } & {\small %
(-1)}^{n+1}{\small W}_{n-1,n-1}{\small (a}_{n}{\small )}%
\end{vmatrix}%
- \\ 
\tfrac{\left( \ln a_{2}\right) ^{n-1}}{a_{2}^{(n-2)(n-3)/2}}%
\begin{vmatrix}
{\small -W}_{1,n-1}{\small (a}_{1}{\small )} & {\small \cdots } & {\small %
(-1)}^{n+1}{\small W}_{n-1,n-1}{\small (a}_{1}{\small )} \\ 
{\small -W}_{1,n-1}{\small (a}_{3}{\small )} & {\small \cdots } & {\small %
(-1)}^{n+1}{\small W}_{n-1,n-1}{\small (a}_{3}{\small )} \\ 
{\small \vdots } & {\small \vdots } & {\small \vdots } \\ 
{\small -W}_{1,n-1}{\small (a}_{n}{\small )} & {\small \cdots } & {\small %
(-1)}^{n+1}{\small W}_{n-1,n-1}{\small (a}_{n}{\small )}%
\end{vmatrix}%
+\cdots + \\ 
(-1)^{n+1}\tfrac{\left( \ln a_{n}\right) ^{n-1}}{a_{n}^{(n-2)(n-3)/2}}%
\begin{vmatrix}
{\small -W}_{1,n-1}{\small (a}_{1}{\small )} & {\small \cdots } & {\small %
(-1)}^{n+1}{\small W}_{n-1,n-1}{\small (a}_{1}{\small )} \\ 
{\small \vdots } & {\small \vdots } & {\small \vdots } \\ 
{\small -W}_{1,n-1}{\small (a}_{n-1}{\small )} & {\small \cdots } & {\small %
(-1)}^{n+1}{\small W}_{n-1,n-1}{\small (a}_{n-1}{\small )}%
\end{vmatrix}%
\end{array}%
\right) .
\end{eqnarray*}%
Factoring out $-1$ from each column of each determinant and using the
induction hypothesis gives 
\begin{eqnarray*}
&&(-1)^{n-1}\tfrac{{\large (}(n-1)!{\large )}^{n-2}\left(
\tprod\limits_{r=0}^{n-2}r!\right) }{\prod\limits_{j=1}^{n}a_{j}^{n-2}}\times
\\
&&\left( 
\begin{array}{c}
\left( \tprod\limits_{r=0}^{n-2}r!\right) ^{n-3}\tfrac{\left( \ln
a_{1}\right) ^{n-1}}{a_{1}^{(n-2)(n-3)/2}}\tfrac{\tprod\limits_{2\leq
i<j\leq n}(\ln a_{j}-\ln a_{i})}{\prod\limits_{j=2}^{n}a_{j}^{(n-2)(n-3)/2}}%
+\cdots + \\ 
(-1)^{n+1}\left( \tprod\limits_{r=0}^{n-2}r!\right) ^{n-3}\tfrac{\left( \ln
a_{n}\right) ^{n-1}}{a_{n}^{(n-2)(n-3)/2}}\tfrac{\tprod\limits_{1\leq
i<j\leq n-1}(\ln a_{j}-\ln a_{i})}{\prod%
\limits_{j=1}^{n-1}a_{j}^{(n-2)(n-3)/2}}%
\end{array}%
\right)
\end{eqnarray*}%
\begin{eqnarray*}
&=&(-1)^{n-1}\tfrac{\left( \tprod\limits_{r=0}^{n-1}r!\right) ^{n-2}}{%
\prod\limits_{j=1}^{n}a_{j}^{(n-1)(n-2)/2}}\times \\
&&{\large (}\left( \ln a_{1}\right) ^{n-1}\tprod\limits_{2\leq i<j\leq
n}(\ln a_{j}-\ln a_{i})+\cdots \\
&&+(-1)^{n+1}\left( \ln a_{n}\right) ^{n-1}\tprod\limits_{1\leq i<j\leq
n-1}(\ln a_{j}-\ln a_{i}){\large )}.
\end{eqnarray*}%
Applying Lemma \ref{L7} to each term of the sum in parentheses yields

$\tfrac{\left( \tprod\limits_{r=0}^{n-1}r!\right) ^{n-2}}{%
\prod\limits_{j=1}^{n}a_{j}^{(n-1)(n-2)/2}}\tprod\limits_{1\leq i<j\leq
n}(\ln a_{j}-\ln a_{i})$. For $n=3$ we have $W_{1,3}(t)=\tfrac{\ln
^{2}t+2\ln t+2}{t}$, $W_{2,3}(t)=2\tfrac{\ln t+1}{t}$, and $W_{3,3}(t)=%
\tfrac{1}{t}$. Thus $\left\vert 
\begin{array}{lll}
{\small W}_{1,3}{\small (a}_{1}{\small )} & {\small -W}_{2,3}{\small (a}_{1}%
{\small )} & {\small W}_{3,3}{\small (a}_{1}{\small )} \\ 
{\small W}_{1,3}{\small (a}_{2}{\small )} & {\small -W}_{2,3}{\small (a}_{2}%
{\small )} & {\small W}_{3,3}{\small (a}_{2}{\small )} \\ 
{\small W}_{1,3}{\small (a}_{3}{\small )} & {\small W}_{2,3}{\small (a}_{3}%
{\small )} & {\small W}_{3,3}{\small (a}_{3}{\small )}%
\end{array}%
\right\vert =$

$\left\vert 
\begin{array}{lll}
\tfrac{\ln ^{2}a_{1}+2\ln a_{1}+2}{a_{1}} & -{\small 2}\tfrac{\ln a_{1}+1}{%
a_{1}} & \tfrac{1}{a_{1}} \\ 
\tfrac{\ln ^{2}a_{2}+2\ln a_{2}+2}{a_{2}} & -{\small 2}\tfrac{\ln a_{2}+1}{%
a_{2}} & \tfrac{1}{a_{2}} \\ 
\tfrac{\ln ^{2}a_{3}+2\ln a_{3}+2}{a_{3}} & -{\small 2}\tfrac{\ln a_{3}+1}{%
a_{3}} & \tfrac{1}{a_{3}}%
\end{array}%
\right\vert =\allowbreak \tfrac{2\left( \ln a_{3}-\ln a_{1}\right) \left(
\ln a_{3}-\ln a_{2}\right) (\ln a_{2}-\ln a_{1})}{a_{1}a_{2}a_{3}}%
\allowbreak $(after some simplification), which equals

$\left( \tprod\limits_{r=0}^{n-1}r!\right) ^{n-2}\tfrac{\tprod\limits_{1\leq
i<j\leq n}(\ln a_{j}-\ln a_{i})}{\prod\limits_{j=1}^{n}a_{j}^{(n-1)(n-2)/2}}$
for $n=3$.
\end{proof}

\begin{proposition}
\textbf{\label{P4}}For $n\geq 3$, let $k_{n}(t)=\dfrac{\dprod%
\limits_{r=0}^{n-1}r!}{t^{n(n-3)/2}}$. Then 
\begin{gather*}
\begin{vmatrix}
k_{n}(a_{1}) & -W_{2,n}(a_{1}) & \cdots & (-1)^{n+1}W_{n,n}(a_{1}) \\ 
k_{n}(a_{2}) & -W_{2,n}(a_{2}) & \cdots & (-1)^{n+1}W_{n,n}(a_{2}) \\ 
\vdots & \vdots & \vdots & \vdots \\ 
k_{n}(a_{n}) & -W_{2,n}(a_{n}) & \cdots & (-1)^{n+1}W_{n,n}(a_{n})%
\end{vmatrix}%
= \\
(-1)^{n-1}(n-1)!\left( \dprod\limits_{r=0}^{n-1}r!\right) ^{n-2}\dfrac{%
\dsum\limits_{i=1}^{n}\left( \prod\limits_{\substack{ 1\leq j<k\leq n  \\ %
j\neq i,k\neq i}}(-1)^{i+1}a_{i}(\ln a_{k}-\ln a_{j})\right) }{%
\prod\limits_{j=1}^{n}a_{j}^{(n-1)(n-2)/2}}.
\end{gather*}
\end{proposition}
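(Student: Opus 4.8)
The plan is to evaluate this determinant directly from the closed form for $W_{m,n}$ in Proposition \ref{P2} together with the explicit value of $k_n$, rather than by the induction used for Proposition \ref{P3}. The point is that here the first column is a single monomial in $a_j$ (no logarithm), which makes a clean reduction to a Vandermonde determinant feasible. First I would factor the common quantity $\left(\prod_{r=0}^{n-1}r!\right)\big/a_j^{(n-1)(n-2)/2}$ out of row $j$, for each $j=1,\dots,n$. Since $(n-1)(n-2)/2-n(n-3)/2=1$, we have $k_n(a_j)=\left(\prod_{r=0}^{n-1}r!\right)a_j\big/a_j^{(n-1)(n-2)/2}$, so the first column becomes $a_j$; by Proposition \ref{P2}, column $m$ (for $m\ge 2$) becomes $\frac{(-1)^{m-1}}{(m-1)!}\sum_{i=0}^{n-m}\frac{(\ln a_j)^{n-m-i}}{(n-m-i)!}$. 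The row factors contribute the scalar $\left(\prod_{r=0}^{n-1}r!\right)^{n}\big/\prod_{j=1}^{n}a_j^{(n-1)(n-2)/2}$.

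Next I would reduce the entries in columns $2,\dots,n$. Writing $P_d(L)=\sum_{s=0}^{d}L^{s}/s!$, column $m$ equals the scalar $\frac{(-1)^{m-1}}{(m-1)!}$ times $(P_{n-m}(\ln a_j))_j$, and the degrees $n-2,n-3,\dots,0$ decrease strictly across $m=2,\dots,n$. Using $P_d-P_{d-1}=L^{d}/d!$, I can perform the elementary column operations that subtract column $m+1$ from column $m$ (processed in increasing $m$); these preserve the determinant and replace each $P_{n-m}$ by its leading monomial $(\ln a_j)^{n-m}/(n-m)!$. Factoring the constant $\frac{(-1)^{m-1}}{(m-1)!(n-m)!}$ out of column $m$ leaves the matrix whose first column is $(a_j)_j$ and whose column $m$ is $((\ln a_j)^{n-m})_j$. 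The accumulated column constants are $\prod_{m=2}^{n}\frac{(-1)^{m-1}}{(m-1)!(n-m)!}=(-1)^{n(n-1)/2}\big/\left[\left(\prod_{r=0}^{n-1}r!\right)\left(\prod_{r=0}^{n-2}r!\right)\right]$.

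It then remains to evaluate $D$, the determinant of the matrix with first column $(a_j)$ and columns $((\ln a_j)^{n-m})_{m=2}^{n}$. Expanding $D$ along the first column gives $\sum_{j=1}^{n}(-1)^{j+1}a_j M_j$, where $M_j$ is the minor on the log-powers with row $j$ deleted. Each $M_j$ is a Vandermonde determinant in $\{\ln a_\ell:\ell\ne j\}$ but with the powers in decreasing order, so reversing its $n-1$ columns yields $M_j=(-1)^{(n-1)(n-2)/2}\prod_{\substack{1\le p<q\le n\\ p,q\ne j}}(\ln a_q-\ln a_p)$. Hence $D=(-1)^{(n-1)(n-2)/2}\sum_{j=1}^{n}(-1)^{j+1}a_j\prod_{\substack{1\le p<q\le n\\ p,q\ne j}}(\ln a_q-\ln a_p)$, which is precisely the numerator in the claimed formula. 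Multiplying the row scalar, the column scalar, and $D$, the powers of $\prod_{r=0}^{n-1}r!$ collapse to $(n-1)!\left(\prod_{r=0}^{n-1}r!\right)^{n-2}$ via $\prod_{r=0}^{n-1}r!=(n-1)!\prod_{r=0}^{n-2}r!$, and the two signs combine through $n(n-1)/2+(n-1)(n-2)/2=(n-1)^2\equiv n-1\pmod 2$ into $(-1)^{n-1}$, giving the stated identity.

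The main obstacle I expect is bookkeeping: keeping the factorial products and, above all, the signs straight through the row factoring, the column reduction of the truncated exponentials, the column normalization, and the Vandermonde column reversal. The one genuinely non-mechanical step is recognizing that after reduction $D$ is the cofactor expansion of a single $(a_j)$-augmented Vandermonde matrix. An equally valid alternative, paralleling Proposition \ref{P3}, would be an induction on $n$ using the recursions (\ref{5}) and (\ref{6}); but that route is messier here, because the $k_n$ column does not obey the two-term recursion (\ref{6}) that drove the induction for $W_{1,n}$.
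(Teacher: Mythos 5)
Your proof is correct, but it follows a genuinely different route from the paper's. The paper proves Proposition \ref{P4} the same way it proves Proposition \ref{P3}: it applies the recursion (\ref{5}) to columns $2,\dots,n$ and the simple scaling $k_{n}(t)=(n-1)!\,k_{n-1}(t)/t^{\,n-2}$ to column $1$, factors the constants and the powers of $a_{j}$ out of rows and columns, expands along column $1$, and then invokes Proposition \ref{P3} at size $n-1$ for each minor (so the ``induction'' is really a reduction to \ref{P3}, with an explicit $n=3$ check as the base). You instead bypass induction, (\ref{5}), (\ref{6}), and Lemma \ref{L7} entirely: starting from the closed form of Proposition \ref{P2}, your row normalization turns column $1$ into $(a_{j})_{j}$, your telescoping column operations exploit $P_{d}-P_{d-1}=L^{d}/d!$ to strip each truncated exponential down to its top monomial $(\ln a_{j})^{n-m}/(n-m)!$, and the resulting $(a_{j})$-augmented Vandermonde determinant is evaluated by cofactor expansion. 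Your sign and factorial bookkeeping checks out: the row factor $\left(\prod_{r=0}^{n-1}r!\right)^{n}\!\big/\prod_{j}a_{j}^{(n-1)(n-2)/2}$, the column constant $(-1)^{n(n-1)/2}\big/\bigl[\bigl(\prod_{r=0}^{n-1}r!\bigr)\bigl(\prod_{r=0}^{n-2}r!\bigr)\bigr]$, the reversal sign $(-1)^{(n-1)(n-2)/2}$, and the congruence $(n-1)^{2}\equiv n-1 \pmod 2$ combine exactly to the stated right-hand side. One small imprecision: literally ``subtracting column $m+1$ from column $m$'' does not produce the leading monomial, because the two columns carry different prefactors $\tfrac{(-1)^{m-1}}{(m-1)!}$ and $\tfrac{(-1)^{m}}{m!}$; you must either add $m$ times column $m+1$ to column $m$, or first factor $\tfrac{(-1)^{m-1}}{(m-1)!}$ out of every column and then subtract adjacent columns. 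Since the constants you record are exactly those produced by the corrected operation, this is a wording slip, not a gap. Your approach buys a shorter, induction-free argument (and would prove Proposition \ref{P3} by the identical mechanism); the paper's buys uniformity with the machinery it has already built.
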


\begin{proof}
We again use induction. So assume that

$%
\begin{vmatrix}
{\small k}_{n-1}{\small (a_{1})} & {\small -W}_{2,n-1}{\small (a_{1})} & 
{\small \cdots } & {\small (-1)}^{n}{\small W}_{n-1,n-1}{\small (a_{1})} \\ 
{\small k}_{n-1}{\small (a_{2})} & {\small -W}_{2,n-1}{\small (a_{2})} & 
{\small \cdots } & {\small (-1)}^{n}{\small W}_{n-1,n-1}{\small (a_{2})} \\ 
{\small \vdots } & {\small \vdots } & {\small \vdots } & {\small \vdots } \\ 
{\small k}_{n-1}{\small (a}_{n-1}{\small )} & {\small -W}_{2,n-1}{\small %
(a_{n-1})} & {\small \cdots } & {\small (-1)}^{n}{\small W}_{n-1,n-1}{\small %
(a_{n-1})}%
\end{vmatrix}%
=$

$(-1)^{n}(n-2)!\left( \tprod\limits_{r=0}^{n-2}r!\right) ^{n-3}\tfrac{%
\dsum\limits_{i=1}^{n-1}\left( \prod\limits_{\substack{ 1\leq j<k\leq n-1 
\\ j\neq i,k\neq i}}(-1)^{i+1}a_{i}(\ln a_{k}-\ln a_{j})\right) }{%
\prod\limits_{m=1}^{n-1}a_{j}^{(n-2)(n-3)/2}}$ for any

$0<a_{1}<a_{2}<\cdots <a_{n-1}$. Using (\ref{5}) we have 
\begin{gather*}
\begin{vmatrix}
{\small k}_{n}{\small (a}_{1}{\small )} & {\small -W}_{2,n}{\small (a}_{1}%
{\small )} & {\small \cdots } & {\small (-1)}^{n+1}{\small W}_{n,n}{\small (a%
}_{1}{\small )} \\ 
{\small k}_{n}{\small (a}_{2}{\small )} & {\small -W}_{2,n}{\small (a}_{2}%
{\small )} & {\small \cdots } & {\small (-1)}^{n+1}{\small W}_{n,n}{\small (a%
}_{2}{\small )} \\ 
{\small \vdots } & {\small \vdots } & {\small \vdots } & {\small \vdots } \\ 
{\small k}_{n}{\small (a}_{n}{\small )} & {\small -W}_{2,n}{\small (a}_{n}%
{\small )} & {\small \cdots } & {\small (-1)}^{n+1}{\small W}_{n,n}{\small (a%
}_{n}{\small )}%
\end{vmatrix}%
= \\
\begin{vmatrix}
\tfrac{\tprod\limits_{r=0}^{n-1}r!}{a_{1}^{n(n-3)/2}} & -\tfrac{%
(n-1)!W_{1,n-1}(a_{1}}{a_{1}^{n-2}}) & \cdots & \tfrac{%
(-1)^{n+1}(n-1)!W_{n-1,n-1}(a_{1})}{a_{1}^{n-2}(n-1)} \\ 
\tfrac{\tprod\limits_{r=0}^{n-1}r!}{a_{2}^{n(n-3)/2}} & -\tfrac{%
(n-1)!W_{1,n-1}(a_{2})}{a_{2}^{n-2}} & \cdots & \tfrac{%
(-1)^{n+1}(n-1)!W_{n-1,n-1}(a_{2})}{a_{2}^{n-2}(n-1)} \\ 
\vdots & \vdots & \vdots & \vdots \\ 
\tfrac{\tprod\limits_{r=0}^{n-1}r!}{a_{n}^{n(n-3)/2}} & -\tfrac{%
(n-1)!W_{1,n-1}(a_{n})}{a_{n}^{n-2}} & \cdots & \tfrac{%
(-1)^{n+1}(n-1)!W_{n-1,n-1}(a_{n})}{a_{n}^{n-2}(n-1)}%
\end{vmatrix}%
.
\end{gather*}

Factoring out $\tprod\limits_{r=0}^{n-1}r!$ from Col. 1, factoring out $%
\tfrac{(n-1)!}{k}$ from Column $k+1,k=1,..,n-1$, and factoring out $\tfrac{1%
}{a_{j}^{n-2}}$ from row $j,j=1,...,n$, we obtain

$\tfrac{{\large (}(n-1)!{\large )}^{n-2}\tprod\limits_{r=0}^{n-1}r!}{%
\prod\limits_{j=1}^{n}a_{j}^{n-2}}%
\begin{vmatrix}
\tfrac{1}{a_{1}^{(n-1)(n-4)/2}} & {\small -W}_{1,n-1}{\small (a}_{1}{\small )%
} & {\small \cdots } & {\small (-1)}^{n+1}{\small W}_{n-1,n-1}{\small (a}_{1}%
{\small )} \\ 
\tfrac{1}{a_{2}^{(n-1)(n-4)/2}} & {\small -W}_{1,n-1}{\small (a}_{2}{\small )%
} & {\small \cdots } & {\small (-1)}^{n+1}{\small W}_{n-1,n-1}{\small (a}_{2}%
{\small )} \\ 
{\small \vdots } & {\small \vdots } & {\small \vdots } & {\small \vdots } \\ 
\tfrac{1}{a_{n}^{(n-1)(n-4)/2}} & {\small -W}_{1,n-1}{\small (a}_{n}{\small )%
} & {\small \cdots } & {\small (-1)}^{n+1}{\small W}_{n-1,n-1}{\small (a}_{n}%
{\small )}%
\end{vmatrix}%
$.

Expanding about Col. 1 gives

\begin{eqnarray*}
&&\tfrac{{\large (}(n-1)!{\large )}^{n-2}\tprod\limits_{r=0}^{n-1}r!}{%
\prod\limits_{j=1}^{n}a_{j}^{n-2}}\times \\
&&\left( 
\begin{array}{c}
\tfrac{1}{a_{1}^{(n-1)(n-4)/2}}%
\begin{vmatrix}
{\small -W}_{1,n-1}{\small (a}_{2}{\small )} & {\small \cdots } & {\small %
(-1)}^{n+1}{\small W}_{n-1,n-1}{\small (a}_{2}{\small )} \\ 
{\small \vdots } & {\small \vdots } & {\small \vdots } \\ 
{\small -W}_{1,n-1}{\small (a}_{n}{\small )} & {\small \cdots } & {\small %
(-1)}^{n+1}{\small W}_{n-1,n-1}{\small (a}_{n}{\small )}%
\end{vmatrix}%
- \\ 
\tfrac{1}{a_{2}^{(n-1)(n-4)/2}}%
\begin{vmatrix}
{\small -W}_{1,n-1}{\small (a}_{1}{\small )} & {\small \cdots } & {\small %
(-1)}^{n+1}{\small W}_{n-1,n-1}{\small (a}_{1}{\small )} \\ 
{\small -W}_{1,n-1}{\small (a}_{3}{\small )} & {\small \cdots } & {\small %
(-1)}^{n+1}{\small W}_{n-1,n-1}{\small (a}_{3}{\small )} \\ 
{\small \vdots } & {\small \vdots } & {\small \vdots } \\ 
{\small -W}_{1,n-1}{\small (a}_{n}{\small )} & {\small \cdots } & {\small %
(-1)}^{n+1}{\small W}_{n-1,n-1}{\small (a}_{n}{\small )}%
\end{vmatrix}%
+\cdots \\ 
+(-1)^{n+1}\tfrac{1}{a_{n}^{(n-1)(n-4)/2}}%
\begin{vmatrix}
{\small -W}_{1,n-1}{\small (a}_{1}{\small )} & {\small \cdots } & {\small %
(-1)}^{n+1}{\small W}_{n-1,n-1}{\small (a}_{1}{\small )} \\ 
{\small \vdots } & {\small \vdots } & {\small \vdots } \\ 
{\small -W}_{1,n-1}{\small (a}_{n-1}{\small )} & {\small \cdots } & {\small %
(-1)}^{n+1}{\small W}_{n-1,n-1}{\small (a}_{n-1}{\small )}%
\end{vmatrix}%
.%
\end{array}%
\right)
\end{eqnarray*}%
.

Factoring out $-1$ from each column of each determinant and using
Proposition \ref{P3} yields 
\begin{eqnarray*}
&&(-1)^{n-1}\tfrac{{\large (}(n-1)!{\large )}^{n-2}\tprod%
\limits_{r=0}^{n-1}r!}{\prod\limits_{j=1}^{n}a_{j}^{n-2}}\times \\
&&\left( 
\begin{array}{c}
\tfrac{1}{a_{1}^{(n-1)(n-4)/2}}\left( \tprod\limits_{r=0}^{n-2}r!\right)
^{n-3}\tfrac{\tprod\limits_{2\leq i<j\leq n}(\ln a_{j}-\ln a_{i})}{%
\prod\limits_{j=2}^{n}a_{j}^{(n-2)(n-3)/2}}+\cdots + \\ 
(-1)^{n+1}\tfrac{1}{a_{n}^{(n-1)(n-4)/2}}\left(
\tprod\limits_{r=0}^{n-2}r!\right) ^{n-3}\tfrac{\tprod\limits_{1\leq i<j\leq
n-1}(\ln a_{j}-\ln a_{i})}{\prod\limits_{j=1}^{n-1}a_{j}^{(n-2)(n-3)/2}}%
\end{array}%
\right)
\end{eqnarray*}%
$=$

\begin{gather*}
=(-1)^{n-1}\tfrac{{\large (}(n-1)!{\large )}^{n-2}\left(
\tprod\limits_{r=0}^{n-1}r!\right) \left( \tprod\limits_{r=0}^{n-2}r!\right)
^{n-3}}{\prod\limits_{j=1}^{n}a_{j}^{n-2}}\times \\
\left( \tfrac{a_{1}\tprod\limits_{2\leq i<j\leq n}(\ln a_{j}-\ln a_{i})}{%
\prod\limits_{j=1}^{n}a_{j}^{(n-2)(n-3)/2}}+\cdots +(-1)^{n+1}\tfrac{%
a_{n}\tprod\limits_{1\leq i<j\leq n-1}(\ln a_{j}-\ln a_{i})}{%
\prod\limits_{j=1}^{n}a_{j}^{(n-2)(n-3)/2}}\right)
\end{gather*}

$=(-1)^{n-1}(n-1)!\left( \tprod\limits_{r=0}^{n-1}r!\right) ^{n-2}\tfrac{%
\dsum\limits_{i=1}^{n}\left( \prod\limits_{\substack{ 1\leq j<k\leq n  \\ %
j\neq i,k\neq i}}(-1)^{i+1}a_{i}(\ln a_{k}-\ln a_{j})\right) }{%
\prod\limits_{j=1}^{n}a_{j}^{(n-1)(n-2)/2}}$. For $n=3$ we have $\left\vert 
\begin{array}{lll}
{\small k}_{3}{\small (a}_{1}{\small )} & {\small -W}_{2,3}{\small (a}_{1}%
{\small )} & {\small W}_{3,3}{\small (a}_{1}{\small )} \\ 
{\small k}_{3}{\small (a}_{2}{\small )} & {\small -W}_{2,3}{\small (a}_{2}%
{\small )} & {\small W}_{3,3}{\small (a}_{2}{\small )} \\ 
{\small k}_{3}{\small (a}_{3}{\small )} & {\small W}_{2,3}{\small (a}_{3}%
{\small )} & {\small W}_{3,3}{\small (a}_{3}{\small )}%
\end{array}%
\right\vert =$

$\left\vert 
\begin{array}{lll}
2 & -{\small 2}\tfrac{\ln a_{1}+1}{a_{1}} & \tfrac{1}{a_{1}} \\ 
2 & -{\small 2}\tfrac{\ln a_{2}+1}{a_{2}} & \tfrac{1}{a_{2}} \\ 
2 & -{\small 2}\tfrac{\ln a_{3}+1}{a_{3}} & \tfrac{1}{a_{3}}%
\end{array}%
\right\vert =\allowbreak 4\tfrac{a_{1}\left( \ln a_{3}-\ln a_{2}\right)
-a_{2}\left( \ln a_{3}-\ln a_{1}\right) +a_{3}(\ln a_{2}-\ln a_{1})}{%
a_{1}a_{2}a_{3}}\allowbreak $ $\allowbreak $(after some simplification),
which equals

$(-1)^{n-1}(n-1)!\left( \tprod\limits_{r=0}^{n-1}r!\right) ^{n-2}\tfrac{%
\dsum\limits_{i=1}^{n}\left( \prod\limits_{\substack{ 1\leq j<k\leq n  \\ %
j\neq i,k\neq i}}(-1)^{i+1}a_{i}(\ln a_{k}-\ln a_{j})\right) }{%
\prod\limits_{j=1}^{n}a_{j}^{(n-1)(n-2)/2}}$ for $n=3$\textbf{.}
\end{proof}

\section{Proof of Theorem \protect\ref{main}}

\begin{proof}
The equation of the osculating hyperplane, $O_{a}$, to $C$ at $t=a$\ is $%
\langle x_{1},...,x_{n}\rangle \cdot \hat{n}(a)=\hat{\alpha}(a)\cdot \hat{n}%
(a)$, where\ $\hat{\alpha}(t)=\langle t,t\log t,...,t{\large (}\log t{\large %
)}^{n-1}\rangle ,$

$\hat{n}(t)=\langle W_{1,n}(t),-W_{2,n}(t),...,(-1)^{n+1}W_{n,n}(t)\rangle
,W_{j,n}(t)=$ the Wronskian of $x_{1}^{\prime }(t),...,x_{j-1}^{\prime
}(t),x_{j+1}^{\prime }(t),...,x_{n}^{\prime }(t),j=1,...,n$. Thus any
intersection point of $O_{1},...,O_{n}$ must be a solution of the linear
system $\langle x_{1},...,x_{n}\rangle \cdot \hat{n}(a_{j})=\hat{\alpha}%
(a_{j})\cdot \hat{n}(a_{j})=0,j=1,...,n$, which can be written in the form 
\begin{gather}
\left[ 
\begin{array}{cccc}
W_{1,n}(a_{1}) & -W_{2,n}(a_{1}) & \cdots & (-1)^{n+1}W_{n,n}(a_{1}) \\ 
W_{1,n}(a_{2}) & -W_{2,n}(a_{2}) & \cdots & (-1)^{n+1}W_{n,n}(a_{2}) \\ 
\vdots & \vdots & \vdots & \vdots \\ 
W_{1,n}(a_{n}) & -W_{2,n}(a_{n}) & \cdots & (-1)^{n+1}W_{n,n}(a_{n})%
\end{array}%
\right] \left[ 
\begin{array}{l}
x_{1} \\ 
\vdots \\ 
x_{n}%
\end{array}%
\right] =  \label{7} \\
\left[ 
\begin{array}{l}
k(a_{1}) \\ 
\vdots \\ 
k(a_{n})%
\end{array}%
\right] ,  \notag
\end{gather}%
where $k(t)=\hat{\alpha}(t)\cdot \hat{n}(t)=W(x_{1},...,x_{n})(t)=\dfrac{%
\dprod\limits_{r=0}^{n-1}r!}{t^{n(n-3)/2}}$ by Lemma \ref{L5}. (\ref{7}) has
a unique solution, $x_{1},...,x_{n}$, by Proposition \ref{P3}. By Cramer's
Rule, 
\begin{eqnarray*}
x_{1} &=& \\
&&%
\begin{vmatrix}
{\small W}_{1,n}{\small (a}_{1}{\small )} & {\small -W}_{2,n}{\small (a}_{1}%
{\small )} & {\small \cdots } & {\small (-1)}^{n+1}{\small W}_{n,n}{\small (a%
}_{1}{\small )} \\ 
{\small W}_{1,n}{\small (a}_{2}{\small )} & {\small -W}_{2,n}{\small (a}_{2}%
{\small )} & {\small \cdots } & {\small (-1)}^{n+1}{\small W}_{n,n}{\small (a%
}_{2}{\small )} \\ 
{\small \vdots } & {\small \vdots } & {\small \vdots } & {\small \vdots } \\ 
{\small W}_{1,n}{\small (a}_{n}{\small )} & {\small -W}_{2,n}{\small (a}_{n}%
{\small )} & {\small \cdots } & {\small (-1)}^{n+1}{\small W}_{n,n}{\small (a%
}_{n}{\small )}%
\end{vmatrix}%
/ \\
&&%
\begin{vmatrix}
{\small k}_{n}{\small (a}_{1}{\small )} & {\small -W}_{2}{\small (a}_{1}%
{\small )} & {\small \cdots } & {\small (-1)}^{n+1}{\small W}_{n}{\small (a}%
_{1}{\small )} \\ 
{\small k}_{n}{\small (a}_{2}{\small )} & {\small -W}_{2}{\small (a}_{2}%
{\small )} & {\small \cdots } & {\small (-1)}^{n+1}{\small W}_{n}{\small (a}%
_{2}{\small )} \\ 
{\small \vdots } & {\small \vdots } & {\small \vdots } & {\small \vdots } \\ 
{\small k}_{n}{\small (a}_{n}{\small )} & {\small -W}_{2}{\small (a}_{n}%
{\small )} & {\small \cdots } & {\small (-1)}^{n+1}{\small W}_{n}{\small (a}%
_{n}{\small )}%
\end{vmatrix}%
\end{eqnarray*}%
$=\tfrac{(-1)^{n-1}(n-1)!\left( \tprod\limits_{r=0}^{n-1}r!\right) ^{n-2}%
\tfrac{\dsum\limits_{i=1}^{n}\left( \prod\limits_{\substack{ 1\leq j<k\leq n 
\\ j\neq i,k\neq i}}(-1)^{i+1}a_{i}(\ln a_{k}-\ln a_{j})\right) }{%
\prod\limits_{j=1}^{n}a_{j}^{(n-1)(n-2)/2}}}{\left(
\tprod\limits_{r=0}^{n-1}r!\right) ^{n-2}\tfrac{\tprod\limits_{1\leq i<j\leq
n}(\ln a_{j}-\ln a_{i})}{\prod\limits_{j=1}^{n}a_{j}^{(n-1)(n-2)/2}}}$ by
Propositions \ref{P3} and \ref{P4}. Simplifying gives $\tfrac{%
(n-1)!\dsum\limits_{i=1}^{n}\left( \prod\limits_{\substack{ 1\leq j<k\leq n 
\\ j\neq i,k\neq i}}(-1)^{n+i}a_{i}(\ln a_{k}-\ln a_{j})\right) }{%
\tprod\limits_{1\leq i<j\leq n}(\ln a_{j}-\ln a_{i})}$. By getting a common
denominator in the right hand side of (\ref{1}), it then follows easily that
the latter expression equals $(n-1)!\dsum\limits_{j=1}^{n}\tfrac{a_{j}}{%
\prod\limits_{\substack{ i=1  \\ i\neq j}}^{n}(\ln a_{j}-\ln a_{i})}$.
\end{proof}

\begin{remark}
In \cite{XZ} the following extension of the Identric mean $I(a,b)=\left( 
\dfrac{a^{a}}{b^{b}}\right) ^{1/(a-b)}/e$ to $n$ variables was given:

$I_{Z}(a_{1},...,a_{n})=\exp \left[ \dfrac{1}{V(a)}\dsum%
\limits_{i=1}^{n}(-1)^{n+i}a_{i}^{n-1}V_{i}(a)\ln a_{i}-m\right] $, where $%
V(a_{1},...,a_{n})=\tprod\limits_{1\leq j<i\leq
n}^{n}(a_{i}-a_{j}),V_{i}(a_{1},...,a_{n})=\left\vert 
\begin{array}{ccccccc}
{\small 1} & {\small 1} & {\small ...} & {\small 1} & {\small 1} & {\small %
...} & {\small 1} \\ 
{\small a}_{1} & {\small a}_{2} & {\small ...} & {\small a}_{i-1} & {\small a%
}_{i+1} & {\small ...} & {\small a}_{n} \\ 
{\small a}_{1}^{2} & {\small a}_{2}^{2} & {\small ...} & {\small a}_{i-1}^{2}
& {\small a}_{i+1}^{2} & {\small ...} & {\small a}_{n}^{2} \\ 
{\small ...} & {\small ...} & {\small ...} & {\small ...} & {\small ...} & 
{\small ...} & {\small ...} \\ 
{\small a}_{1}^{n-2} & {\small a}_{2}^{n-2} & {\small ...} & {\small a}%
_{i-1}^{n-2} & {\small a}_{i+1}^{n-2} & {\small ...} & {\small a}_{n}^{n-2}%
\end{array}%
\right\vert $, and $m=\dsum\limits_{k=1}^{n-1}\dfrac{1}{k}$. For $n=3$, if
one lets $x_{1}(t)=t$, $x_{2}(t)=t^{2}$, $x_{3}(t)=\log t$, then $%
M_{z}(a,b,c)=I_{Z}(a,b,c)=U_{2}(a,b,c)$, where $U_{2}$ is given in \cite{STO}%
. This probably holds for all $n$.
\end{remark}

\begin{conjecture}
If $x_{1}(t)=t$, $x_{2}(t)=t^{2}$, $...,x_{n-1}(t)=t^{n-1},x_{n}(t)=\log t$,
then $M_{n}\left( a_{1},...,a_{n}\right) =I_{Z}(a_{1},...,a_{n})$.
\end{conjecture}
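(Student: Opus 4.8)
The plan is to mirror the proof of Theorem~\ref{main}, but with the curve $\hat\alpha(t)=\langle t,t^2,\dots,t^{n-1},\log t\rangle$, and to read off the \emph{last} coordinate $i_n$ of the intersection point rather than the first. Since $x_n(t)=\log t$ has inverse $e^y$, the target is $M_n=e^{i_n}$, so it suffices to prove $i_n=\ln I_Z$.

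First I would compute the relevant Wronskians. Here $x_k'(t)=k\,t^{k-1}$ for $k<n$ and $x_n'(t)=t^{-1}$, so each $W_{j,n}$ is (a scalar multiple of) a Wronskian of pure powers $t^r$. Using the standard formula $W(t^{r_1},\dots,t^{r_m})=\left(\prod_{p<q}(r_q-r_p)\right)t^{\sum_p r_p-\binom{m}{2}}$, a short exponent count shows that for $j<n$ the surviving exponent set is $\{-1,0,\dots,n-2\}\setminus\{j-1\}$, whose exponents sum to $\frac{(n-1)(n-2)}{2}-j$, so that the power of $t$ is $\frac{(n-1)(n-2)}{2}-j-\binom{n-1}{2}=-j$; hence $W_{j,n}(t)=c_j\,t^{-j}$ for constants $c_j$, while $W_{n,n}(t)=c_n$ is constant. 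The decisive consequence is that the right-hand side $k(t)=\hat\alpha(t)\cdot\hat n(t)=W(x_1,\dots,x_n)(t)$ (the cofactor identity used in the proof of Theorem~\ref{main}) is affine in $\log t$: indeed $x_j(t)W_{j,n}(t)=c_j$ for $j<n$ while $x_n(t)W_{n,n}(t)=c_n\log t$, so $k(t)=A_0+B_0\log t$ with $A_0=\sum_{j<n}(-1)^{j+1}c_j$ and $B_0=(-1)^{n+1}c_n$.

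Next comes the linear system. The coefficient matrix $A=[(-1)^{j+1}W_{j,n}(a_i)]$ has column $j$ equal to $(-1)^{j+1}c_j\,a_i^{-j}$ for $j<n$ and a constant last column; factoring the column constants reduces $\det A$ to a Vandermonde determinant in the variables $a_i^{-1}$, which is nonzero, so the hyperplanes meet in a unique point (alternatively one may invoke Theorem~\ref{Osc}). Cramer's rule for $i_n=x_n$ replaces the constant last column by the vector $(A_0+B_0\log a_i)_i$. Splitting that column as $A_0\cdot(\text{ones})+B_0\cdot(\log a_i)_i$ and noting that the ``ones'' piece reproduces the denominator determinant, I obtain the clean decomposition $i_n=\frac{A_0}{B_0}+\frac{\det C''}{\det C}$, where $C$ has last column $1$ and $C''$ has last column $\log a_i$, both with first columns $a_i^{-1},\dots,a_i^{-(n-1)}$ (here I use $B_0=(-1)^{n+1}c_n$ to cancel the leftover column factor).

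The determinant ratio is handled by a diagonal scaling: multiplying row $i$ by $a_i^{n-1}$ turns each reciprocal-Vandermonde determinant into an honest Vandermonde determinant in $a_i$. After the same column reordering in both (so the overall sign and the factor $\prod_i a_i^{n-1}$ cancel), the denominator becomes $V(a)$ and expanding the numerator along its last column yields $\frac{\det C''}{\det C}=\frac{1}{V(a)}\sum_i(-1)^{n+i}a_i^{n-1}V_i(a)\ln a_i$, which is exactly the logarithmic part of $\ln I_Z$. It then remains only to identify the constant: I would prove $A_0/B_0=-m=-\sum_{k=1}^{n-1}\frac{1}{k}$, which finishes the argument, since then $i_n=\ln I_Z$ and hence $M_n=e^{i_n}=I_Z$. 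I expect this last identity to be the main obstacle: $B_0=(-1)^{n+1}(n-1)!\prod_{r=0}^{n-2}r!$ is immediate from the top-left minor, but $A_0=W(t,\dots,t^{n-1},\log t)|_{t=1}$ is a falling-factorial determinant, and the harmonic-number value $A_0/B_0=-H_{n-1}$ (which I have checked directly for $n=3$ and $n=4$) seems to require either an induction on $n$ built from a recursion for $k(t)$ in the spirit of Lemma~\ref{L6}, or an explicit evaluation of the $c_j$ followed by summation of $\sum_{j<n}(-1)^{j+1}c_j$.
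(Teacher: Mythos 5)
You should first be aware that the paper does \emph{not} prove this statement: it is left as a conjecture (the author only remarks that it is ``probably somewhat easier to prove than Theorem~\ref{main}''), so there is no proof of record to compare yours against, and a completed version of your argument would actually settle it. That said, I checked your skeleton and it is sound: the exponent count giving $W_{j,n}(t)=c_j t^{-j}$ for $j<n$ and $W_{n,n}(t)\equiv c_n$ is correct; hence $k(t)=\hat\alpha(t)\cdot\hat n(t)=A_0+B_0\log t$ is affine in $\log t$; the Vandermonde-in-$a_i^{-1}$ computation gives a unique intersection point; and the row scaling by $a_i^{n-1}$, followed by expansion along the last column (the same column permutation being applied to numerator and denominator so that signs and the factor $\prod_i a_i^{n-1}$ cancel), produces exactly $\frac{1}{V(a)}\sum_{i=1}^n(-1)^{n+i}a_i^{n-1}V_i(a)\ln a_i$. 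Direct computation confirms your small cases: $W(t,t^2,\log t)=-3+2\log t$ and $W(t,t^2,t^3,\log t)=22-12\log t$, consistent with $A_0/B_0=-H_{n-1}$.

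The genuine gap is the one you flagged yourself, $A_0/B_0=-\sum_{k=1}^{n-1}\tfrac1k$; as written nothing is proven until it is closed, but your second suggested route (explicit $c_j$'s) does work, and you should carry it out. The one trap is a sign: in $W_{j,n}$, $j<n$, the function $x_n'(t)=t^{-1}$ contributes the exponent $-1$ in the \emph{last} column, so sorting the exponents into increasing order before applying the power-Wronskian formula costs a factor $(-1)^{n-2}$. Doing this bookkeeping gives
\begin{equation*}
c_j=(-1)^{n}\,\frac{(n-1)!}{j}\cdot\frac{\prod_{r=1}^{n-1}r!}{j!\,(n-1-j)!},\qquad j=1,\dots,n-1,
\end{equation*}
while $c_n=(n-1)!\prod_{r=0}^{n-2}r!$ has no sign issue, as you note. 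Therefore
\begin{equation*}
\frac{A_0}{B_0}=\frac{\sum_{j=1}^{n-1}(-1)^{j+1}c_j}{(-1)^{n+1}c_n}
=-\sum_{j=1}^{n-1}\frac{(-1)^{j+1}}{j}\binom{n-1}{j}
=-\sum_{k=1}^{n-1}\frac{1}{k},
\end{equation*}
where the last step is the classical identity $\sum_{j=1}^{N}\frac{(-1)^{j+1}}{j}\binom{N}{j}=H_N$, provable by induction on $N$ via Pascal's rule, or by integrating $\frac{1-(1-x)^N}{x}=\sum_{j=1}^{N}(-1)^{j+1}\binom{N}{j}x^{j-1}$ over $[0,1]$ and substituting $u=1-x$ to get $\int_0^1(1+u+\cdots+u^{N-1})\,du=H_N$. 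With this lemma inserted, your argument is a complete and correct proof of the conjecture; if you drop the $(-1)^n$ in $c_j$ you will instead get $+H_{n-1}$, contradicting your own $n=3,4$ checks, so the sign really must be tracked.
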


This conjecture is probably somewhat easier to prove than Theorem \ref{main}.

\end{document}